\theoremstyle{plain}
	\newtheorem{theorem}{Theorem}
	\newtheorem{corollary}[theorem]{Corollary}
	\newtheorem{lemma}[theorem]{Lemma}
	\newtheorem{proposition}[theorem]{Proposition}
\theoremstyle{remark}
	\newtheorem{remark}[theorem]{Remark}
\def\Enn{\mathbb{N}}
\DeclarePairedDelimiter\floor{\lfloor}{\rfloor}
\author[Jeffrey Shallit and Ingrid Vukusic]{Jeffrey Shallit\affiliationmark{1}\thanks{Research supported by NSERC grant 2024-03725.}
  \and Ingrid Vukusic \affiliationmark{2}\thanks{Research funded by the Austrian Science Fund (FWF) 10.55776/J4850.}
}
\title[Fibonacci word rectangles]{Balanced Fibonacci word rectangles,\\
and beyond}
\affiliation{
  School of Computer Science, University of Waterloo, Waterloo, Canada\\
  Department of Mathematics, University of York, York, United Kingdom
}
\keywords{Sturmian words, balancedness, automata, Walnut}
\begin{document}
\publicationdata{vol. 28:2}{2026}{26}{10.46298/dmtcs.16955}{2025-11-19; 2025-11-19; 2026-03-31}{2026-04-09}
\maketitle
\begin{abstract}
Following a recent paper of Anselmo et al.,
we consider $m \times n$ rectangular matrices formed from the Fibonacci word, and
we show that their balance properties can be solved with a finite automaton.
We also generalize the result to every Sturmian characteristic word corresponding
to a quadratic irrational.  Finally, we also examine the analogous question for the Tribonacci word and the Thue--Morse word.
\end{abstract}

\section{Introduction}

Let $(a_i)_{i \geq 0}$ be an infinite sequence over $\Enn$.
We can define an infinite matrix ${\cal A} =(a_{k,\ell})_{k,\ell\geq 0}$ by $a_{k,\ell} = a_{k+\ell}$.  
Then we can view the $m \times n$ submatrices of $\cal A$ as matrices where the successive rows form shifted length-$n$ ``windows'' of $(a_i)_{i \geq 0}$, and
the entry in the upper left corner has index sum $i$:
\begin{equation}
    A(i,m,n)
    := \begin{pmatrix}
        a_i & a_{i+1} & \dots & a_{i+n-1} \\
        a_{i+1} & a_{i+2} & \dots & a_{i+n} \\
        \vdots & \vdots &     & \vdots \\
        a_{i+m-1} & a_{i+m} & \dots & a_{i+m+n-2} \\
    \end{pmatrix}.
\label{aimn}
\end{equation}
These matrices are sometimes called Hankel matrices in the literature \citep{Markovsky:2019}.

The sum over all entries in the matrix $A(i,m,n)$ equals
\begin{equation}
    T(i,m,n)
    := \sum_{k = 0}^{m-1} \sum_{\ell = 0}^{n-1} a_{i+k+\ell}.
\label{sum}
\end{equation}
It is clear that $A(i,m,n)$ and $A(i,n,m)$ contain the same number of occurrences of each element of $(a_i)_{i \geq 0}$, and so in particular $T(i,m,n) = T(i,n,m)$ for all $i,m,n$.

Now suppose that $(a_i)_{i \geq 0}$ is an infinite sequence (or word) over $\{0,1 \}$.  In this case we call $A(i,m,n)$ a {\it word rectangle}.  We say the $m \times n$ rectangles are
{\it balanced with respect to each other\/} (or just that the pair $(m,n)$ is {\it balanced}\,) if there are at most two distinct values among all the sums $(T(i,m,n))_{i \geq 0}$.  This is a refinement of a more familiar notion of balance for
binary words, one that has been
well-studied \citep{Vuillon:2003}.  Our notion of two-dimensional balance appears under a different name in work by \cite{Puzynina:2019}.

From results of \cite{Berthe} we know that $(m,n)$ is balanced for {\it all\/} $m,n$ only in trivial cases.  Hence,
the {\it balance problem\/} for $(a_i)_{i \geq 0}$ is the following: determine  which pairs $(m,n)$
of natural numbers  are balanced. 

The famous infinite Fibonacci word ${\bf f} = f_0 f_1 f_2\cdots = 01001010\cdots$ has been the subject of numerous papers \citep{Berstel:1986b}.  It can be defined via the formula
\[
    f_{n-1} = \floor{(n+1)\gamma} - \floor{n \gamma} \in \{0,1\},
\]
for $n \geq 1$,
where $\gamma = (3- \sqrt{5})/2$.  
Recently \cite{Anselmo} showed the following (their Theorem 18):
for $m,n \geq 2$, if $\max(m,n)$ is a Fibonacci number, then the $m \times n$ word rectangles of the infinite Fibonacci matrix ${\cal A}_{\gamma}$ are balanced.  Also see \cite{Berthe}. However, there are still other $(m,n)$ pairs with the balance property, such as $(4,3)$.  This raises the natural question, which $(m,n)$ pairs are balanced for the Fibonacci word?  One can easily verify, using the fact that there are exactly $t+1$ distinct contiguous blocks of length $t$ in the Fibonacci word, that $(m,n)$ is balanced for $1 \leq m \leq n \leq 6$ except when $(m,n) \in\{ (2,4),(3,6),(4,4), (4,6), (6,6) \}$.

In this note we prove a theorem (Corollary~\ref{fibb}) characterizing exactly those pairs $(m,n)$ that are balanced.  Our characterization is in terms of finite automata.
The particular result of Anselmo et al.\ then follows as a special case.  More generally, we show how the same sort of characterization can be obtained for the matrices formed from any Sturmian word corresponding to a quadratic irrational $\alpha$, and we show how to implement this in the free software {\tt Walnut}.

We also examine the analogous question for the Tribonacci word and the Thue--Morse word.

\section{Sturmian words}
Let $\alpha \in (0,1)$ be an irrational number, and define
$$a_i = \lfloor (i+1) \alpha \rfloor - \lfloor i \alpha \rfloor$$
for $i \geq 1$.  The infinite word
$(a_i)_{i \geq 1}$ is sometimes called a Sturmian characteristic word \citep{BrownT:1993,Berstel&Seebold:2002}.  Although by convention Sturmian words are indexed starting at $1$, we can define $a_0 = 0$ without creating any new blocks that do not already appear in $\bf f$,
by a well-known result \cite[Prop.~2.1.18]{Berstel&Seebold:2002}.

We now define the infinite matrix ${\cal A}_{\alpha}$ from the terms of $(a_i)_{i \geq 1}$; we call ${\cal A}_{\alpha}$ a Sturmian matrix.
By telescoping cancellation we see that
$$\sum_{j \leq i \leq k} a_i =
\lfloor (k+1) \alpha \rfloor -
\lfloor j \alpha \rfloor .$$
Hence, from the definition
\eqref{sum}, we get
\begin{align}
    T(i+1,m,n) - T(i,m,n) 
    &= (a_{i+n} + \dots + a_{i+m+n-1})
        - (a_{i} + \dots + a_{i+m-1})\nonumber\\
    &= (\floor{(i+m+n)\alpha} - \floor{(i+n)\alpha})
        - (\floor{(i+m)\alpha} - \floor{i \alpha}).
        \label{fund}
\end{align}
Note that for all $x,y\geq 0$ we have $\floor{x+y} = \floor{x} + \floor{y} + \delta$ with $\delta \in \{0,1\}$.
Therefore,
$\floor{(i+m+n)\alpha} - \floor{(i+n)\alpha} = \floor{m \alpha} + \delta_1$
and $\floor{(i+m)\alpha} - \floor{i \alpha} = \floor{m \alpha} + \delta_2$, with $\delta_1, \delta_2 \in \{0,1\}$.
Thus,
\begin{equation}
    T(i+1,m,n) - T(i,m,n)
    = \delta_1 - \delta_2
    \in \{-1,0,1\}.
\label{bnd2}
\end{equation}
This fact can also be seen in another, perhaps simpler, way: since by Eq.~\eqref{fund} we know $T(i+1,m,n) - T(i,m,n)$ is the difference in the number of $1$'s in the two length-$m$ words
$a_{i+n} \cdots a_{i+m+n-1}$ and
$a_i \cdots a_{i+m-1}$, Eq.~\eqref{bnd2} follows immediately from the fact that all Sturmian words are balanced.

Define $\Delta(i,m,n) := T(i+1,m,n)-T(i,m,n)$.
The argument above then gives the following criterion for $A(i,m,n)$ to be balanced.
\begin{lemma}
If $(a_i)_{i \geq 0}$ is a Sturmian word corresponding to the irrational number $\alpha$, then in the matrix 
${\cal A}_\alpha = (a_{i,j})_{i,j\geq 0}$
the $m \times n$ blocks are balanced
if and only if the sequence
$\Delta(i,m,n)$
has no blocks of the form
$1,0,0,\ldots, 0,1$ or
$-1,0,0,\ldots,0, -1$.
\label{lemma1}
\end{lemma}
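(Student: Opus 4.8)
\noindent\emph{Sketch of the intended proof.}
The plan is to drop the word $(a_i)$ from consideration immediately after invoking Eq.~\eqref{bnd2}, and to argue purely about the integer sequence $T_i := T(i,m,n)$ and its difference sequence $d_i := \Delta(i,m,n) = T_{i+1}-T_i$. The only input beyond bookkeeping is the fact that $d_i \in \{-1,0,1\}$ for every $i$, which is exactly Eq.~\eqref{bnd2}; that the blocks $m\times n$ are balanced means, by definition, that $\{\,T_i : i\ge 0\,\}$ has at most two elements.

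First I would prove the ``only if'' direction by contraposition. Suppose $(d_i)_{i\ge 0}$ contains a block $1,0,\ldots,0,1$, i.e.\ there are indices $i<j$ with $d_i=d_j=1$ and $d_k=0$ for all $i<k<j$. Then $T_{i+1}=T_i+1$, the value is constant through $T_{i+1}=T_{i+2}=\cdots=T_j$, and $T_{j+1}=T_j+1=T_i+2$; hence the three distinct integers $T_i$, $T_i+1$, $T_i+2$ all occur among the $T_\ell$, so $(m,n)$ is not balanced. A block $-1,0,\ldots,0,-1$ is handled symmetrically. Thus balance of the $m\times n$ blocks forces the absence of both forbidden blocks.

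For the ``if'' direction, the key observation is that forbidding \emph{both} blocks is equivalent to saying that the subsequence of nonzero terms of $(d_i)$ strictly alternates in sign: the first nonzero term after a $+1$ (if any) must be $-1$, and the first nonzero term after a $-1$ (if any) must be $+1$. Granting this alternation, a one-line induction on the prefix sums $T_i-T_0=\sum_{0\le k<i} d_k$ shows that they lie in $\{0,1\}$ if the first nonzero term of $(d_i)$ is $+1$, in $\{-1,0\}$ if it is $-1$, and in $\{0\}$ if $(d_i)$ has no nonzero term. In every case $\{\,T_i : i\ge 0\,\}$ has at most two elements, so $(m,n)$ is balanced.

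I do not anticipate a genuine obstacle; the whole argument rests on the already-established bound $d_i\in\{-1,0,1\}$. The only point needing mild care is phrasing the equivalence ``forbidden block present $\iff$ the signs of the nonzero subsequence of $(d_i)$ fail to alternate'' cleanly, together with the degenerate cases — a block $1,1$ with no intervening zero, and a difference sequence with only finitely many nonzero terms — which amount to a short routine case check.
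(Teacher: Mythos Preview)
Your proposal is correct and rests on the same single ingredient as the paper, namely Eq.~\eqref{bnd2} giving $d_i\in\{-1,0,1\}$; the ``only if'' direction (forbidden block $\Rightarrow$ three distinct values of $T$) is identical to the paper's.

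For the converse you argue directly: absence of both forbidden blocks forces the nonzero terms of $(d_i)$ to alternate in sign, whence the partial sums $T_i-T_0$ are confined to a two-element set. The paper instead argues the contrapositive: assuming $(m,n)$ is unbalanced, it picks indices $i<j$ with $|T(i,m,n)-T(j+1,m,n)|\ge 2$ and $j-i$ \emph{minimal}, and reads off immediately that $\Delta(i)=\Delta(j)=a\in\{-1,1\}$ with all intermediate $\Delta$'s zero. The two arguments are equivalent in content; the minimality trick is a bit shorter and sidesteps the separate alternation lemma and the case split on the sign of the first nonzero $d_i$, while your formulation makes the structure of $(T_i)$ more explicit. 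Either way the proof is a routine consequence of $d_i\in\{-1,0,1\}$.
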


\begin{proof}
We claim that
$(\Delta(i,m,n))_{i \geq 0}$ has a block of the form $1,0,0,\ldots, 0,1$ or
$-1,0,0,\ldots,0, -1$ if and only if 
$(m,n)$ is not balanced.

Suppose it does have a block of the form $a,0,0,\ldots,0,a$ for $a \in \{ -1, 1\}$; say there are indices
$i < j$ such that
$\Delta(i,m,n) = a$,
$\Delta(j,m,n) = a$ and
$\Delta(k,m,n) = 0$ for $i<k<j$.
Then by telescoping cancellation
we have $T(j+1,m,n)= T(i,m,n) + 2a$, proving that $(m,n)$ is not balanced.

Now suppose $(m,n)$ is not balanced, meaning that there exist $i< j$
such that $|T(i,m,n) - T(j+1,m,n)| \geq 2$.  Without loss of generality we may assume that $j-i$ is as small as possible.
Since from Eq.~\eqref{bnd2} we know that $\Delta(i,m,n) \in \{-1,0,1\}$, it follows that we must have $\Delta(j) = a$,
$\Delta(i) = a$, and
$\Delta(k) = 0$ for $i < k < j$ and
$a \in \{-1,1\}$.
\end{proof}

\section{Finite automata}

Now suppose $\alpha \in (0,1)$ is a quadratic irrational.  Then, as shown by 
\cite{Schaeffer&Shallit&Zorcic:2024},
there is an algorithm to find a finite automaton $M$
that takes $n$ and $x$ in parallel as inputs, represented in a certain numeration system, and
accepts if and only if $x = \lfloor n \alpha \rfloor$.  The numeration system required is the so-called Ostrowski $\alpha$-numeration system, as discussed, for example, by \citet[\S 3.9]{Allouche&Shallit:2003}.

Furthermore, we also know that we can algorithmically translate a first-order logical formula $\cal F$ involving addition, subtraction, comparisons, and other automata, into an automaton accepting the values of the free variables making $\cal F$ true \citep{Bruyere&Hansel&Michaux&Villemaire:1994}.

One strategy that initially seems plausible is to construct an automaton that takes
$m,n,z$ in parallel as input, and accepts if $z$ is the number of distinct values of $T(i,m,n)$ occurring over all $i$.
If we had that, we could then accept those $(m,n)$ pairs corresponding to $z \leq 2$.  However, this is not possible; we show in Section~\ref{diverse} that
the number of distinct values taken by $T(i,m,n)$ for
$m = n = F_{3j}/2$ is $\Theta(j)$, where
$F_t$ is the $t$'th Fibonacci number. This is impossible for an automaton that computes a function $z$ from inputs $m,n$ and accepts all three in parallel \citep{Shallit:2021}.

Instead, we can use the balance criterion stated in Lemma~\ref{lemma1}. This criterion is clearly expressible as a formula $\cal F$ in first-order logic, since $x = \lfloor n \alpha \rfloor$ is expressible \citep{Schaeffer&Shallit&Zorcic:2024}.  Then, using the techniques discussed by \cite{Shallit:2023}, we can translate $\cal F$ into an appropriate automaton that takes $m$ and $n$ as input (in the Ostrowski $\alpha$-numeration system) and accepts if and only if the $m \times n$ blocks of the matrix $A$ are balanced.
We therefore get our main result.
\begin{theorem}
There is an algorithm that, given a quadratic irrational
$\alpha$ with $0 < \alpha < 1$, constructs a finite automaton that takes as input pairs $(m,n)$, represented in the Ostrowski $\alpha$-numeration system, and accepts if and only if the $m \times n$ blocks of the Sturmian matrix ${\cal A}_{\alpha}$  are balanced.
\label{main}
\end{theorem}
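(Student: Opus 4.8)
The plan is to combine the balance criterion of Lemma~\ref{lemma1} with the two algorithmic ingredients recalled above: the automaton computing $x = \lfloor n\alpha\rfloor$ in the Ostrowski $\alpha$-numeration system \cite{Schaeffer&Shallit&Zorcic:2024}, and the effective translation of first-order formulas into automata \cite{Bruyere&Hansel&Michaux&Villemaire:1994}. The latter applies because, for a quadratic irrational $\alpha$, the Ostrowski $\alpha$-numeration system is addable, so addition, subtraction, and comparison of natural numbers are all recognized by automata over this system. Everything then reduces to writing ``the $m \times n$ blocks of $\mathcal{A}_\alpha$ are balanced'' as a first-order formula with free variables $m$ and $n$.

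First I would express the single-step difference $\Delta(i,m,n)$ in a form amenable to the floor automaton. By Eq.~\eqref{fund} we have $\Delta(i,m,n) = (p-q)-(r-s)$, where $p = \lfloor (i+m+n)\alpha\rfloor$, $q = \lfloor (i+n)\alpha\rfloor$, $r = \lfloor (i+m)\alpha\rfloor$, and $s = \lfloor i\alpha\rfloor$. Each of $p,q,r,s$ is first-order definable: form the relevant sum using automatic addition, then apply the floor-automaton relation. Since $p \geq q$ and $r \geq s$, the three conditions $\Delta(i,m,n) \in \{1, 0, -1\}$ translate into the equalities $p+s = q+r+1$, $p+s = q+r$, and $p+s+1 = q+r$, respectively --- all first-order over $\Enn$ with $+$, $<$, and the floor automaton. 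Write $\Delta_1(i,m,n)$, $\Delta_0(i,m,n)$, $\Delta_{-1}(i,m,n)$ for these formulas.

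Next I would transcribe the forbidden-pattern condition of Lemma~\ref{lemma1}. For each $a \in \{1,-1\}$, the statement ``the sequence $(\Delta(i,m,n))_{i\geq 0}$ contains a block $a, 0, \ldots, 0, a$'' becomes
\begin{equation*}
\exists i\, \exists j\ \bigl( i < j\ \wedge\ \Delta_a(i,m,n)\ \wedge\ \Delta_a(j,m,n)\ \wedge\ \forall k\, (i < k < j \rightarrow \Delta_0(k,m,n)) \bigr);
\end{equation*}
the unbounded gap $j-i$ is handled transparently by quantification. Taking the disjunction over $a \in \{1,-1\}$, negating, and keeping only $m$ and $n$ free yields a first-order formula $\mathrm{Bal}(m,n)$ that holds exactly when $(m,n)$ is balanced. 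Running $\mathrm{Bal}$ through the procedure of \cite{Bruyere&Hansel&Michaux&Villemaire:1994} produces the automaton; the whole construction is effective once the (eventually periodic) continued fraction of $\alpha$ is given, since that is all one needs to build the Ostrowski adder and the floor automaton.

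The one genuine obstacle is bookkeeping about numeration systems: the floor values $\lfloor i\alpha\rfloor$ and the argument $i$ must be represented in the \emph{same} Ostrowski $\alpha$-system, and addition must be automatic in it. Both requirements are precisely what the cited results supply for quadratic $\alpha$; granting them, the rest is a routine assembly. It is also worth noting that Eq.~\eqref{fund}, and hence the displayed formula, is valid for all $i \geq 0$ because the convention $a_0 = 0$ agrees with $\lfloor \alpha \rfloor - \lfloor 0 \cdot \alpha \rfloor$, so no special handling of the first index is needed.
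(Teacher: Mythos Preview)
Your proposal is correct and follows essentially the same route as the paper: use Lemma~\ref{lemma1} to reduce balance to the absence of the patterns $1,0,\ldots,0,1$ and $-1,0,\ldots,0,-1$ in $(\Delta(i,m,n))_i$, express $\Delta$ via Eq.~\eqref{fund} and the floor automaton of \cite{Schaeffer&Shallit&Zorcic:2024}, and then compile the resulting first-order sentence into an automaton via \cite{Bruyere&Hansel&Michaux&Villemaire:1994}. Your write-up is in fact more explicit than the paper's (spelling out the arithmetic rewriting $p+s=q+r+1$, etc., and the role of addability of the Ostrowski system), but the underlying argument is the same.
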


Let us now carry out this construction in {\tt Walnut} for the special case of the Fibonacci word, where
$\alpha = \gamma = (3-\sqrt{5})/2$.  We use the Zeckendorf numeration system \citep{Lekkerkerker:1952,Zeckendorf:1972}, which represents integers as a sum of distinct Fibonacci numbers $F_j$.  A representation of $n$ is written 
compactly as a binary
word $e_i \cdots e_2$, where
$n = \sum_{2 \leq j \leq i} e_j F_j$.

To implement the criterion in Lemma~\ref{lemma1}, we need to compute the difference
$T(i+1,m,n)-T(i,m,n) \in \{ -1,0, 1\}$.
Unfortunately we cannot write $-1$ in the ordinary Zeckendorf numeration system in {\tt Walnut}, since it only deals with natural numbers.\footnote{Actually, the negaFibonacci system is implemented in {\tt Walnut}, so we could have used that.  But we stick with the Zeckendorf representation system because it is more familiar.}
To get around this problem, we work with the quantity
$$T(i+1,m,n)-T(i,m,n)+1 \in \{0,1,2 \}$$
instead.

The following {\tt Walnut} code implements the criterion of Lemma~\ref{lemma1}.\footnote{For an introduction to {\tt Walnut}, see the Appendix.}  For more about {\tt Walnut}, see
\cite{Mousavi:2016,Shallit:2023}.
\begin{verbatim}
reg shift {0,1} {0,1} "([0,0]|[0,1][1,1]*[1,0])*":
# accepts x,y such that y is the left shift of the bits of x
def phin "?msd_fib (n=0 & y=0) | Ez $shift(n-1,z) & y=z+1":
# (n,y) is accepted iff floor(n*phi) = y, where phi = (1+sqrt(5))/2
def alpha "?msd_fib (n=0&z=0) | (n>0 & Ex $phin(n,x) & z+1+x=2*n)":
# z = floor(gamma*n), where gamma = (3-sqrt(5))/2
def diff "?msd_fib Ea,b,c,d $alpha(i,a) & $alpha(i+m,b) 
   & $alpha(i+n,c) & $alpha(i+m+n,d) & z+b+c=a+d+1":
# z = Delta(i,m,n) + 1 = T(i+1,m,n)-T(i,m,n)+1
def bal "?msd_fib ~Ei,j i>0 & i<j & (
   ($diff(i,m,n,2) 
      & (Ak (i<k & k<j) => $diff(k,m,n,1)) 
      & $diff(j,m,n,2)) |
   ($diff(i,m,n,0) 
      & (Ak (i<k & k<j) => $diff(k,m,n,1)) 
      & $diff(j,m,n,0)))":
# implements the balance criterion of Lemma 1
\end{verbatim}
This gives a $15$-state automaton {\tt bal} that accepts exactly those $(m,n)$ pairs, in the Zeckendorf numeration system, that are balanced.  An input consisting of a word made of pairs  is accepted  by the automaton 
if there is a path from state $0$ labeled with 
those pairs. See Figure~\ref{fig1}.
\begin{figure}[htb]
\includegraphics[width=\textwidth]{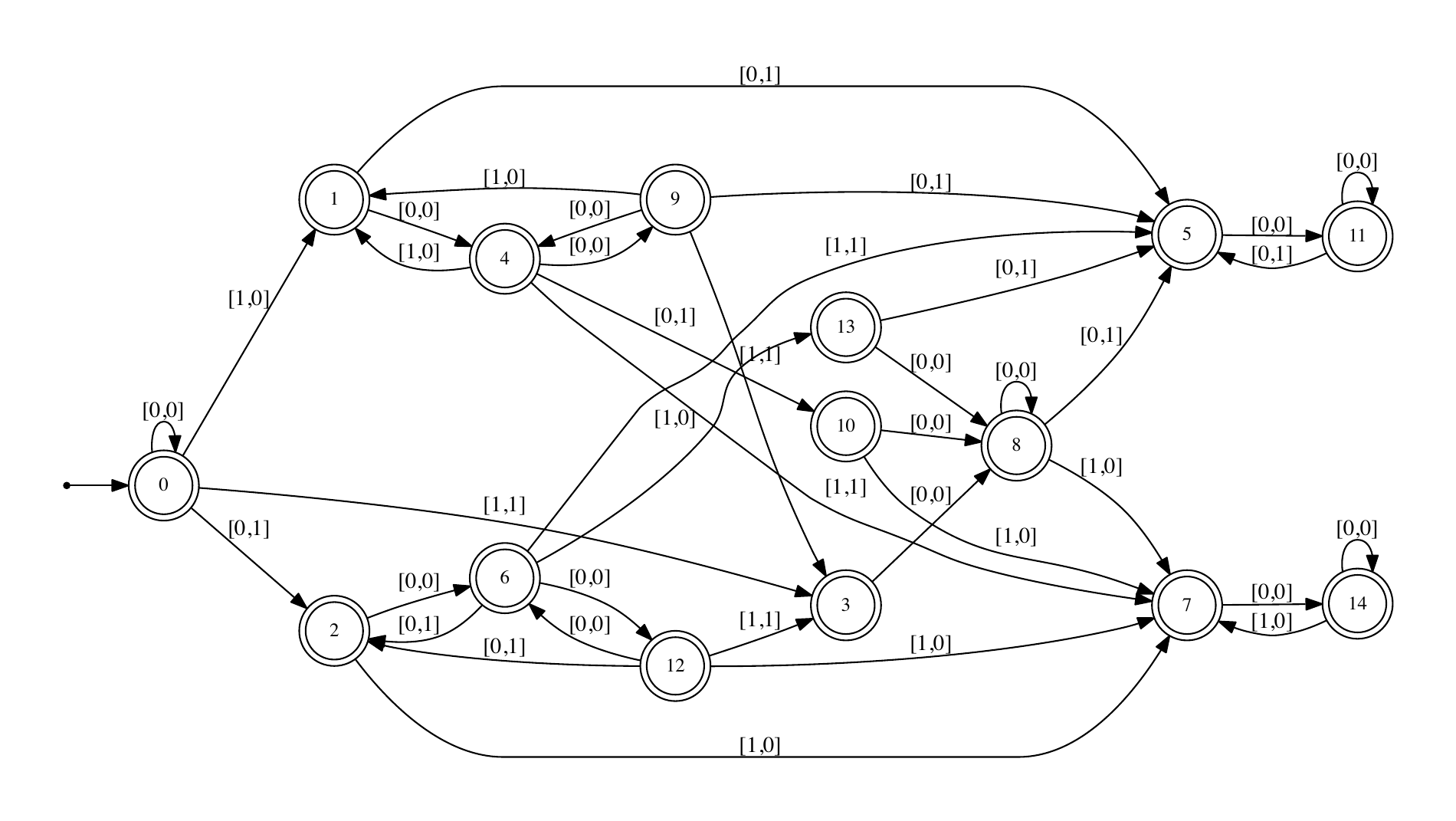}
\caption{Automaton accepting balanced $(m,n)$ pairs for the Fibonacci word.}
\label{fig1}
\end{figure}

We have therefore shown
\begin{corollary}
The automaton in Figure~\ref{fig1} solves the balance property for the Fibonacci word rectangles of dimension $m \times n$.
\label{fibb}
\end{corollary}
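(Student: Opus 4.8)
The plan is to view the corollary as the concrete instantiation of Theorem~\ref{main} at $\alpha = \gamma = (3-\sqrt5)/2$, so that the only real content is checking that the displayed {\tt Walnut} program faithfully encodes the criterion of Lemma~\ref{lemma1}. First I would record that $\gamma$ is a quadratic irrational in $(0,1)$ --- a root of $x^2-3x+1=0$, with $\gamma = 2-\varphi = 1/\varphi^2$ where $\varphi=(1+\sqrt5)/2$ --- and that its Ostrowski numeration system is the Zeckendorf system used in the code. By Theorem~\ref{main} a finite automaton with the claimed behaviour therefore exists and is produced by the algorithm underlying that theorem; it remains to argue that the automaton {\tt bal} output by the code \emph{is} such an automaton, which amounts to verifying each {\tt def} against its stated meaning and then appealing to the soundness of {\tt Walnut}'s translation of first-order formulas over Ostrowski numeration systems into automata \cite{Bruyere&Hansel&Michaux&Villemaire:1994, Schaeffer&Shallit&Zorcic:2024}.

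I would carry out the verification in the order the predicates are defined. For {\tt shift} and {\tt phin}: the regular expression {\tt shift} accepts a pair $(x,y)$, read most-significant-digit first and allowing leading zeros, exactly when $y$ arises from $x$ by moving every Zeckendorf digit from index $j$ to index $j+1$, i.e.\ $y=\sum_j e_j F_{j+1}$ when $x=\sum_j e_j F_j$; combined with the classical identity $\floor{(k+1)\varphi} = 1 + \sum_j e_j F_{j+1}$ for $k=\sum_j e_j F_j$, this yields $\floor{n\varphi} = 1 + (\text{digit-shift of }n-1)$ for $n\ge 1$, so {\tt phin}$(n,y)$ holds iff $y=\floor{n\varphi}$. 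For {\tt alpha}: since $\gamma = 2-\varphi$ and $n\varphi\notin\mathbb{Z}$ for $n\ge 1$, we get $\floor{n\gamma} = 2n-1-\floor{n\varphi}$, which is exactly the relation {\tt z+1+x=2*n}; the input $0$ is treated separately, consistently with $\floor{0\cdot\gamma}=0$ and the convention $a_0=0$ of Section~2. For {\tt diff}: substituting $\alpha=\gamma$ into Eq.~\eqref{fund} gives $\Delta(i,m,n) = \floor{i\gamma} + \floor{(i+m+n)\gamma} - \floor{(i+m)\gamma} - \floor{(i+n)\gamma}$, so naming these four floors $a,b,c,d$ in the order used in the code we obtain $\Delta(i,m,n)+1 = a+d-b-c+1$, i.e.\ the equation {\tt z+b+c=a+d+1} with $z=\Delta(i,m,n)+1$; by Eq.~\eqref{bnd2} this $z$ lies in $\{0,1,2\}$, so the shift by $1$ keeps all quantities in $\Enn$ as {\tt Walnut} requires. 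Finally, {\tt bal} is the negation of ``there exist $i<j$ with $\Delta(i,m,n)=\Delta(j,m,n)=a$ and $\Delta(k,m,n)=0$ for $i<k<j$'' for $a+1\in\{0,2\}$ --- precisely the absence of the forbidden patterns $1,0,\dots,0,1$ and $-1,0,\dots,0,-1$. Chaining these facts with Lemma~\ref{lemma1} shows {\tt bal} accepts $(m,n)$, in Zeckendorf, if and only if the $m\times n$ blocks of ${\cal A}_\gamma$ are balanced; that the minimized automaton has $15$ states is then a terminating, machine-checkable computation.

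The main obstacle is not conceptual but is entirely careful bookkeeping of indices and constants: one must pin down the direction of the digit-shift realized by {\tt shift}, the ``$n-1$'' and ``$+1$'' corrections in {\tt phin}, the ``$+1$'' that makes $\Delta+1$ nonnegative, and the special handling of the input $0$ in {\tt phin} and {\tt alpha}; and, more subtly, one must confirm that the guard {\tt i>0} in {\tt bal} (rather than {\tt i>=0}) discards no forbidden block occurring at the very start of $(\Delta(i,m,n))_{i\ge 0}$. Here one uses that $\Delta(0,m,n) = \floor{(m+n)\gamma}-\floor{m\gamma}-\floor{n\gamma}\in\{0,1\}$, which already rules out a block $-1,0,\dots,0,-1$ beginning at $i=0$, and one disposes of the remaining pattern either directly or by using {\tt Walnut} to check that the {\tt i>0} and {\tt i>=0} versions of {\tt bal} define the same language. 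A useful consistency check throughout is to verify that {\tt bal} accepts the pairs already known to be balanced --- for instance $(4,3)$, and $(m,n)$ whenever $\max(m,n)$ is a Fibonacci number --- thereby recovering Theorem~18 of Anselmo et al.~\cite{Anselmo} as a special case.
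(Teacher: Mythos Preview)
Your proposal is correct and follows essentially the same approach as the paper, which treats the corollary simply as the outcome of the displayed {\tt Walnut} computation (the paper writes only ``We have therefore shown'' before stating it). You supply the step-by-step verification of each {\tt def} that the paper leaves implicit, and you correctly flag and resolve the {\tt i>0} versus {\tt i>=0} issue; the cleanest justification there is the one you hint at---recurrence of the Fibonacci word (indeed, the fact cited as \cite[2.1.18]{Berstel&Seebold:2002}) guarantees that any forbidden block in $(\Delta(i,m,n))_{i\ge 0}$ beginning at $i=0$ recurs at some $i'>0$, so nothing is lost.
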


As an example, consider using the automaton to determine whether the $4 \times 18$ blocks are balanced.  We represent $4$
by $101$ in the Zeckendorf system and $18$ by
$101000$.  Padding the shorter with $0$ and entering the inputs in parallel
$[0,1][0,0][0,1][1,0][0,0][1,0]$ into the automaton in Figure~\ref{fig1}, we start at state $0$ and successively visit states $2, 6, 2, 7, 14$, and $7$ and accept.  So $(m,n)$ is balanced.

As a special case, we can now recover the theorem of Anselmo et al., as follows:
\begin{corollary}
If $\max(m,n)$ is a Fibonacci number then the $m \times n$ matrices of ${\cal A}_{\gamma}$
are balanced.
\end{corollary}
\begin{proof}
We use {\tt Walnut}:
\begin{verbatim}
reg isfib msd_fib "0*10*":
# is input a Fibonacci number
def max "?msd_fib (z=m|z=n) & z>=m & z>=n":
# is z = max(x,y)
eval anselmo "?msd_fib Am,n,z ($max(m,n,z) & $isfib(z)) => $bal(m,n)":
# translation of their theorem
\end{verbatim}
And {\tt Walnut} returns {\tt TRUE}.
\end{proof}

The {\tt bal} automaton allows us to prove a wide variety of results about the $(m,n)$-balanced pairs for ${\cal A}_{\gamma}$.  Here is one example.

\begin{proposition}
Let $n \geq 2$.
\begin{itemize}
  \item[(a)] 
  Let $F_{n+1} < i,i' < F_{n+2}$ and $j \geq F_{n-1}$.
  Then $(i,j)$ is balanced if and only if $(i',j)$ is balanced.
    \item[(b)]
    Let $1 \leq i < F_n$
    and $j \geq 1$.
    Then $(F_{n+1} + i,j)$
    is balanced if and only if
    $(F_{n+2} -i, j)$
    is balanced.
\end{itemize}
\end{proposition}

\begin{proof}
We use {\tt Walnut}.
For (a) we use the commands
\begin{verbatim}
reg adjfib msd_fib msd_fib "[0,0]*[0,1][1,0][0,0]*":
# accepts (F_n, F_{n+1}) for n>=2
eval claim_a "?msd_fib Au,v,x,i,ip,j ($adjfib(u,v) & x=u+v & i>v 
   & ip>v & i<x & ip<x & j>=v-u) => ($bal(i,j) <=> $bal(ip,j))":
# u = F_{n}, v = F_{n+1}, x = F_{n+2}
\end{verbatim}
And {\tt Walnut} returns {\tt TRUE}.

For (b) we use the commands
\begin{verbatim}
eval claim_b "?msd_fib Au,v,x,i,j 
   ($adjfib(u,v) & x=u+v & i>=1 & i<u & j>=1) 
   => ($bal(v+i,j) <=> $bal(x-i,j))":
# u = F_n, v = F_{n+1}, x = F_{n+2}
\end{verbatim}
And {\tt Walnut} returns {\tt TRUE}.
\end{proof}

We can also obtain a result about the density of balanced pairs $(m,n)$.
\begin{proposition}
Suppose $F_i < m \leq F_{i+1}$.  Then for all $n\geq 1$ there exists $j$,
$1 \leq j \leq F_{i+1}$, 
such that $(m,n+j)$
is balanced.   Furthermore the upper bound $F_{i+1}$ is optimal for each $m$.
\end{proposition}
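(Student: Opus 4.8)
The plan is to verify the proposition with {\tt Walnut}, reusing the automaton {\tt bal} together with the automaton {\tt adjfib} introduced above, which accepts exactly the pairs $(F_i,F_{i+1})$ of consecutive Fibonacci numbers (in the Zeckendorf system). The hypothesis ``$F_i<m\le F_{i+1}$'', together with the value $F_{i+1}$ of the claimed bound, can be encoded by quantifying over variables $u,v$ with $(u,v)$ accepted by {\tt adjfib}, $u<m$, and $m\le v$; then $v$ plays the role of $F_{i+1}$. Since for every $m\ge 2$ there is a unique such pair $(u,v)$, and for $m\le 1$ the hypothesis is vacuous, nothing is lost by this encoding. I would then split the proposition into its two halves and check each separately.

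For the existence half, the claim is: for all $m,n$ with $n\ge 1$ and $F_i<m\le F_{i+1}$, some $j$ with $1\le j\le F_{i+1}$ makes $(m,n+j)$ balanced. In {\tt Walnut} this reads
\begin{verbatim}
eval dense_a "?msd_fib Au,v,m,n ($adjfib(u,v) & u<m & m<=v & n>=1) =>
   (Ej j>=1 & j<=v & $bal(m,n+j))":
\end{verbatim}
and one expects the answer {\tt TRUE}. Conceptually this says that, for each fixed $m$, the set of $n$ for which the $m\times n$ rectangles are balanced has bounded gaps, of size at most $F_{i+1}$ --- which is plausible precisely because the balanced pairs are recognized by a finite automaton whose transitions mirror the Ostrowski/Zeckendorf expansion.

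For the optimality half, I would show that for every $m$ with $F_i<m\le F_{i+1}$ there is some $n\ge 1$ for which none of $(m,n+1),\dots,(m,n+F_{i+1}-1)$ is balanced; together with the existence half this forces the first balanced value of the second coordinate after $n$ to occur exactly at $n+F_{i+1}$, so no smaller uniform bound can work --- that is the meaning of ``optimal for each $m$''. The corresponding command is
\begin{verbatim}
eval dense_b "?msd_fib Au,v,m ($adjfib(u,v) & u<m & m<=v) =>
   (En n>=1 & (Aj (j>=1 & j<v) => ~$bal(m,n+j)))":
\end{verbatim}
again expected to return {\tt TRUE}. The main obstacle is getting this second formula exactly right: one must make sure that ``optimal for each $m$'' is read as requiring a bad witness $n$ for \emph{every} $m$ in the interval, not merely for the single worst $m$; that the quantifier placement keeps the statement inside the decidable fragment so that {\tt Walnut} terminates; and that the extreme cases (notably $m=2$, covered by the pair $(F_i,F_{i+1})=(1,2)$, and small $n$) behave as the statement predicts. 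One should also be prepared for the intermediate automata produced in these two verifications to be moderately large.
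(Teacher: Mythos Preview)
Your proposal is correct and follows essentially the same route as the paper: both halves are verified in {\tt Walnut} by universally quantifying over a pair $(u,v)$ accepted by {\tt adjfib}, checking the existence of a balanced $j\le v$ for the first claim, and exhibiting for every $m$ a witness $n$ with no balanced $j<v$ for the optimality claim. The only differences from the paper's code are cosmetic (variable names, and your explicit {\tt n>=1} guards, which the paper omits).
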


\begin{proof}
We use {\tt Walnut}.
\begin{verbatim}
eval density "?msd_fib Am,x,y,n ($adjfib(x,y) & x<m & m<=y) 
   => Ej j>=1 & j<=y & $bal(m,n+j)":
# x = F_i, y = F_{i+1}
\end{verbatim}
which proves the first statement.  To prove the second, we show that for each $m$
there exists $n$ such that the least $j\geq 1$
with $(m,n+j)$ balanced is $\geq F_{i+1}$.
\begin{verbatim}
eval density2 "?msd_fib Am,x,y ($adjfib(x,y) & x<m & m<=y) 
   => (En Aj (j>=1 & j<y) => ~$bal(m,n+j))":
\end{verbatim}
And {\tt Walnut} returns {\tt TRUE}.
\end{proof}

\noindent These results essentially say that the density of $n$
for which $(m,n)$ is balanced is lower bounded by (approximately)
$\frac{1}{\varphi m}$.

\section{Another version of the criterion for balance}

We can obtain a more ``human readable'' version of the criterion for the $m \times n$ submatrices of ${\cal A}_{\gamma}$ to be balanced, as follows:  

\begin{theorem}
Assume $m \leq n$ and let the Zeckendorf expansion of $m$ (resp., $n$) be $F_{a_1} + \dots + F_{a_k}$ (resp., $ F_{b_1} + \dots + F_{b_\ell}$) for $a_1 > \cdots >a_k$ and $b_1 > \cdots > b_\ell$.
Then $(m,n)$ is accepted by {\tt bal} if and only if at least one of the following cases holds:
\begin{itemize}
    \item[(a)] $m = 0$ or $m = 1$; 

    \item[(b)] $m = F_{a_1}$ and $a_1 \notin \{b_1, \ldots, b_\ell\}$ and for $b_j > a_1 > b_{j+1}$ the indices $a_1$ and $b_j$ have the same parity; 
    
    \item[(c)] $m = F_{a_1}$ and $a_1 \in \{b_1, \ldots, b_\ell\}$; 
    
    \item[(d)] $a_1 =  b_\ell$, (i.e., the largest Fibonacci number in the representation of $m$ is the smallest Fibonacci number in the representation of $n$)
	and $b_\ell$ and $b_{\ell-1}$ have distinct parities; 
     
    \item[(e)] $a_1 < b_\ell$ (i.e., the largest Fibonacci number in the representation of $m$ is smaller than the smallest Fibonacci number in the representation of $n$). 
\end{itemize}
\end{theorem}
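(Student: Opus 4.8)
The plan is to reduce the statement to a verification that the automaton {\tt bal} and the automaton encoding conditions (a)--(e) accept the same language, and then to make this verification rigorous — ideally by {\tt Walnut}, with a hand argument as backup. First I would write each of the five cases as a first-order formula over the Zeckendorf (Fibonacci) numeration system. Cases (a), (c), (d), (e) are immediately expressible: membership of a Fibonacci index in a Zeckendorf expansion is a regular property (``the bit in position $a_1$ is $1$''), ``$m=F_{a_1}$'' says the expansion of $m$ is a single $1$, and comparisons of the positions $a_1, b_\ell, b_1$ are just inequalities on the lengths/leading-one positions of the inputs. Case (b) needs the parity condition: I would introduce an auxiliary automaton that, given $n$ and a position $p$, extracts the largest index $b_j$ of the Zeckendorf expansion of $n$ that is $> p$ (or reports there is none), and then the parity constraint ``$a_1 \equiv b_j \pmod 2$'' becomes a condition on whether the number of Fibonacci terms, or better the index itself, is even — expressible since ``$x$ is even'' is Fibonacci-automatic. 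Concretely I would build a single automaton {\tt crit} on input $(m,n)$ (after a {\tt WLOG} swap so that $m \le n$, handled by a disjunction with the roles exchanged, legitimate since $T(i,m,n)=T(i,n,m)$) that accepts iff at least one of (a)--(e) holds, and then check {\tt eval equiv "?msd\_fib Am,n (m<=n) => (\$bal(m,n) <=> \$crit(m,n))"} returns {\tt TRUE}.

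Running in parallel with the machine verification, I would give the conceptual explanation, which is really where the content lies. By Lemma~\ref{lemma1}, $(m,n)$ is balanced iff the sequence $\Delta(i,m,n)$ contains no block $1,0^*,1$ or $-1,0^*,-1$; and by Eq.~\eqref{fund}, $\Delta(i,m,n)$ is the difference between the number of $1$'s in the length-$m$ factor of $\mathbf f$ starting at $i+n$ and the one starting at $i$. Since $\mathbf f$ is Sturmian, the number of $1$'s in a length-$m$ window takes only two values, $\lfloor m\gamma\rfloor$ and $\lfloor m\gamma\rfloor+1$; call a position $i$ \emph{heavy} if its window has the larger count. The set of heavy positions is itself (the complement of) a Sturmian-type set, and $\Delta(i,m,n)=[\,i+n\text{ heavy}\,]-[\,i\text{ heavy}\,]$. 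A bad block $1,0^*,1$ then corresponds exactly to two consecutive ``heaviness-change'' events of the same sign in the sequence $i \mapsto [\,i\text{ heavy}\,] - [\,(i-n)\text{ heavy}\,]$ — equivalently, to the shift-by-$n$ of the heavy set failing to interleave properly with the heavy set. I would show that whether this interleaving fails is governed by how the Ostrowski/Zeckendorf digits of $m$ sit relative to those of $n$: the position $a_1$ (the order of magnitude of $m$) determines the ``scale'' of the heavy-position pattern, and the relevant Fibonacci recurrence $F_{a_1+1}=F_{a_1}+F_{a_1-1}$ together with the identity $\lfloor F_k \gamma\rfloor = F_{k-1}$ (for suitable parity of $k$, with an off-by-one otherwise) is what produces the parity condition in case (b) and the ``$a_1=b_\ell$ with $b_\ell,b_{\ell-1}$ of opposite parity'' condition in case (d). The clean cases (c) and (e) are where $m$'s leading term either coincides with a term of $n$ (so the shift aligns a block boundary) or is so small that the shift by $n$ moves windows by a full super-scale, automatically avoiding same-sign pile-ups.

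The main obstacle, I expect, is not the logical encoding but proving that the five cases are \emph{exhaustive and mutually exclusive in the right way} — i.e., that no balanced pair slips through and no unbalanced pair is wrongly captured — via a human argument; the parity bookkeeping around $\lfloor F_k\gamma\rfloor$ and the carry behavior of Zeckendorf addition when forming $i+n$, $i+m+n$ in Eq.~\eqref{fund} is delicate, and the interaction between $m$'s second-largest term $F_{a_2}$ and $n$'s smallest terms is exactly the kind of thing that is easy to get wrong by hand. For this reason the \emph{honest} proof I would present is: (1) state the formal encoding of (a)--(e), (2) invoke Theorem~\ref{main}/Corollary~\ref{fibb} to know {\tt bal} is correct, (3) present the {\tt Walnut} equivalence check as the proof of correctness, and (4) append the heavy-position heuristic above as a remark explaining \emph{why} the criterion looks the way it does. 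If a purely human proof is demanded, I would isolate the claim as a finite case analysis on $\min(a_1,b_\ell)$ versus $\max$, and on the parities of $a_1,b_\ell,b_{\ell-1}$, reducing each case to the no-$1,0^*,1$ condition via an explicit description of the first few $\Delta$ values using Eq.~\eqref{fund} — but I would flag this as the laborious part and rely on {\tt Walnut} to certify it.
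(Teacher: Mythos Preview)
Your core strategy---encode conditions (a)--(e) as an automaton and then ask {\tt Walnut} to verify equivalence with {\tt bal} under the hypothesis $m\le n$---is exactly what the paper does. The differences are in the packaging. First, the paper encodes each case directly as a regular expression over the paired Zeckendorf digit alphabet (e.g.\ case~(b) becomes \texttt{([0,0]|[0,1])*[0,1][0,0]([0,0][0,0])*[1,0]([0,0]|[0,1])*}), which sidesteps your auxiliary-automaton plan for extracting $b_j$ and testing its parity: the parity of $b_j-a_1$ is just ``an even number of $[0,0]$ pairs between the relevant $1$-digits,'' a one-line regex. Your first-order route would work but is heavier than needed, and your remark that ``$x$ is even'' is Fibonacci-automatic is beside the point---what matters is parity of a \emph{digit position}, which is a regular property of the string, not an arithmetic one on values. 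Second, the paper's discovery step is simply tracing accepting paths through the (restricted) {\tt bal} automaton and reading off the five cases; your heavy-position/interleaving heuristic is an interesting explanatory overlay but plays no role in the paper's proof and is not needed for correctness.
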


\begin{proof}
To find the characterization we traced the paths in the automaton in Figure~\ref{fig:colored}, which is the automaton {\tt bal} from Figure~\ref{fig1} restricted to $m \leq n$ using the
following {\tt Walnut} code:
\begin{verbatim}
def balp "?msd_fib $bal(m,n) & m<=n":
\end{verbatim}

\begin{figure}[htb]
\includegraphics[width=\textwidth]{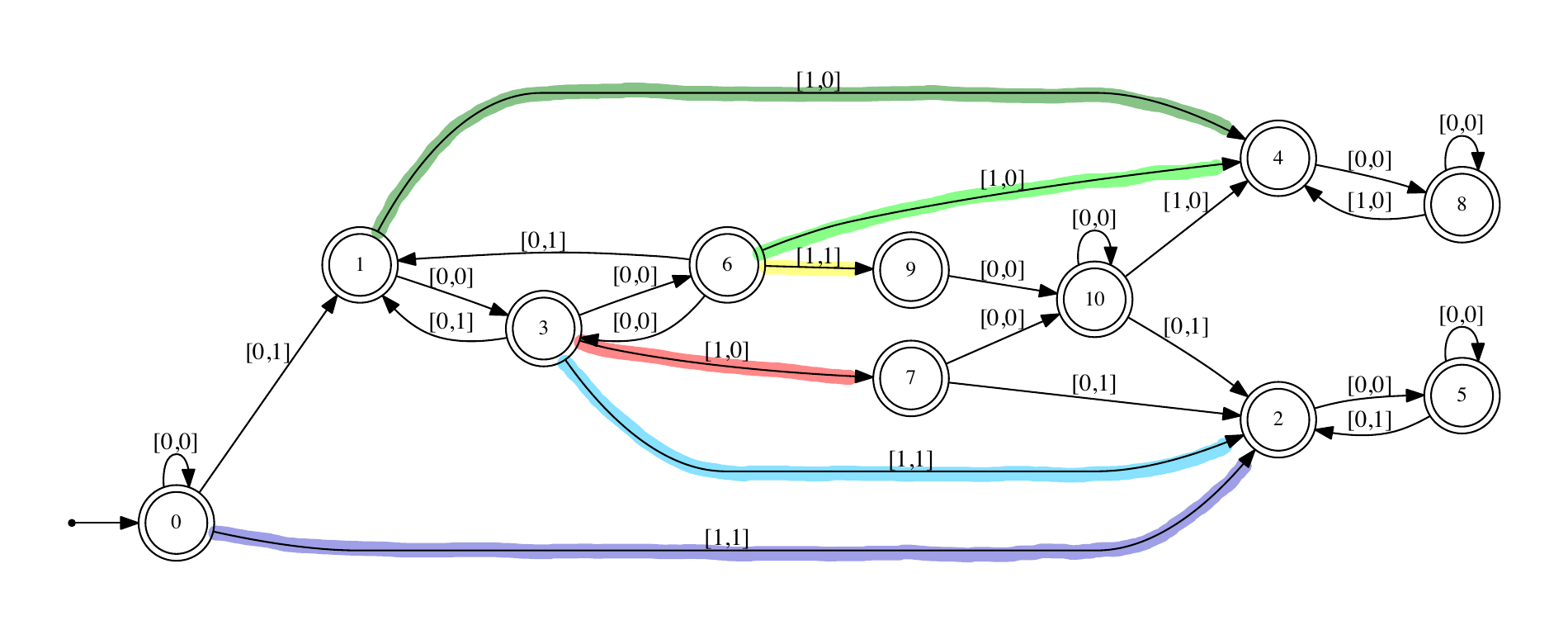}
\caption{Automaton {\tt balp} accepting balanced $(m,n)$ pairs for the Fibonacci word with $m \leq n$. }
\label{fig:colored}
\end{figure}

The {\tt Walnut} code below verifies the characterization. 
Note that for simplicity, the regular expressions are not necessarily valid Zeckendorf representations. We can do this because eventually in {\tt test} only the valid Zeckendorf representations are considered by
{\tt Walnut}.
The mentioned colors are meant to help the interested reader find the paths in Figure~\ref{fig:colored} corresponding to each regular expression. The ``upper part'' refers to the states $4,8$; the ``lower part'' to $2,5$ (and the state $10$ belongs to both).

\begin{verbatim}
reg case_a msd_fib msd_fib 
   "([0,0]|[0,1])*|([0,0]|[0,1])*([1,0]|[1,1])":

reg case_b msd_fib msd_fib 
   "([0,0]|[0,1])*[0,1][0,0]([0,0][0,0])*[1,0]([0,0]|[0,1])*":
# lower part of red

reg case_c msd_fib msd_fib "([0,0]|[0,1])*[1,1]([0,0]|[0,1])*":
# dark blue, light blue, and the lower part of yellow

reg case_d msd_fib msd_fib 
   "([0,0]|[0,1])*[0,1]([0,0][0,0])*[1,1]([0,0]|[1,0])*":
# upper part of yellow

reg case_e msd_fib msd_fib "([0,0]|[0,1])*([0,0]|[1,0])*":
# dark green, light green, and upper part of red

eval test "?msd_fib Am,n (m<=n) => 
   (($case_a(m,n)|$case_b(m,n)|$case_c(m,n)|$case_d(m,n)|$case_e(m,n)) 
   <=> $bal(m,n))":
\end{verbatim}
And {\tt Walnut} returns {\tt TRUE}.
\end{proof}

\section{Diverse rectangles}
\label{diverse}

\begin{theorem}
Consider the word rectangles formed from the Fibonacci word $\bf f$ with the indexing $f_0 f_1 f_2 \cdots\allowbreak = 0100\cdots$.
The number of distinct values
of $T(i,m,n)$, taken over all $i$,
for $m = n = F_{3k}/2$, is at least
$k+1$.
\label{bnd}
\end{theorem}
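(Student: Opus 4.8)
The plan is to exploit the telescoping identity \eqref{fund}, which says $\Delta(i,m,n) = T(i+1,m,n)-T(i,m,n)$ equals
$$
\bigl(\floor{(i+m+n)\gamma} - \floor{(i+n)\gamma}\bigr) - \bigl(\floor{(i+m)\gamma} - \floor{i\gamma}\bigr),
$$
so that $T(N,m,n) - T(0,m,n) = \sum_{i=0}^{N-1}\Delta(i,m,n)$. Summing the $\Delta$'s telescopes the nested floor differences, giving a clean closed form: with $m=n$,
$$
T(N,n,n) - T(0,n,n) = \sum_{i=0}^{N-1}\bigl(\floor{(i+2n)\gamma} - 2\floor{(i+n)\gamma} + \floor{i\gamma}\bigr),
$$
which is the second difference of the sequence $s_N := \sum_{i<N}(\floor{(i+n)\gamma}-\floor{i\gamma})$; but more usefully it is itself, after one more telescoping, of the form $g(N) - g(0)$ where $g$ counts ones in a window. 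I would rewrite $T(N,n,n) = (\text{number of }1\text{'s in the }n\times n\text{ Hankel block starting at }N)$ and track its value at a well-chosen arithmetic progression of starting indices.

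The key computational step is to choose $n = F_{3k}/2$ and the starting indices $i_r = r\cdot(\text{something like }F_{3k})$ for $r = 0,1,\dots,k$, and show the values $T(i_r,n,n)$ are all distinct — ideally strictly monotone in $r$. The mechanism: $\gamma$ has continued fraction $[0;2,1,1,1,\dots]$ and $F_{3k}\gamma$ is extremely close to an integer (the convergents of $\gamma$ involve Fibonacci numbers), with the fractional part of $F_{3k}\gamma$ being roughly $\gamma^{3k}$ in size and of a controlled sign. Using $\floor{(x+F_{3k})\gamma} = \floor{x\gamma} + \floor{F_{3k}\gamma} + \varepsilon(x)$ where $\varepsilon(x)\in\{0,1\}$ is $1$ exactly when the tiny fractional part pushes $x\gamma$ over the next integer, one can show that advancing the starting index by $F_{3k}$ changes $T$ by a predictable amount — and because the perturbation $\gamma^{3k}$ is so small, over a block of length $n = F_{3k}/2$ the count of such ``carry'' events is very close to $0$ or to the full $n$ depending on sign, and crucially differs from one value of $r$ to the next. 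Accumulating $k$ such controlled increments forces at least $k+1$ distinct values.

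The main obstacle — and the part I would spend the most care on — is making the ``carry count'' argument precise: showing that as the starting index increases by multiples of $F_{3k}$, the number of indices $j$ in the relevant window for which $\{j\gamma\}$ lands in the tiny interval of length $\sim\gamma^{3k}$ near $1$ genuinely changes with $r$, rather than staying constant. This is a three-distance / Ostrowski-digit argument: one wants the window length $n=F_{3k}/2$ to be large enough that it ``sees'' the discrepancy but small enough that the count stays in a regime where each step of $r$ moves it by exactly one (or at least by a nonzero, consistently-signed amount). I would handle this by writing everything in the Ostrowski $\gamma$-numeration system — where $F_{3k}/2$ and multiples of $F_{3k}$ have especially simple digit strings — and invoking the standard formula for $\floor{x\gamma}$ in terms of Ostrowski digits (as in \cite{Schaeffer&Shallit&Zorcic:2024}) to compute $T(i_r,n,n)$ exactly; alternatively, since Theorem~\ref{main} already gives us a \texttt{Walnut}-decidable description, one could even verify the ``$\geq k+1$ distinct values'' claim for an infinite family by a single automaton-theoretic statement, but the self-contained route through the telescoped floor sum and the smallness of $\{F_{3k}\gamma\}$ is the cleaner proof to write out.
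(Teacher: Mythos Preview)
Your outline is not yet a proof: the entire argument hinges on the ``carry count'' step that you yourself flag as the main obstacle, and nothing in the proposal actually carries it out. Choosing $i_r = rF_{3k}$ and hoping that the number of $j$ in the window with $\{j\gamma\}$ in a tiny interval changes by exactly one as $r$ increments is plausible heuristics, but the three-distance/Ostrowski bookkeeping required to make this rigorous for \emph{every} $k$ and every $r\le k$ is substantial, and you have not supplied it. As written, the proposal is a plan, not a proof.

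More importantly, you are working harder than necessary. The paper does \emph{not} exhibit $k+1$ indices with pairwise distinct $T$-values. Instead it exhibits just \emph{two} explicit indices --- namely $(F_{6n-1}-1)/4$ and $(F_{6n+2}-1)/4$ when $3k=6n$, and $(F_{6n+5}-1)/4$ and $(F_{6n+2}-1)/4$ when $3k=6n+3$ --- at which the $T$-values differ by exactly $k$. Since Eq.~\eqref{bnd2} already tells us that $|T(i+1,m,n)-T(i,m,n)|\le 1$, the integer-valued function $i\mapsto T(i,m,n)$ must hit every integer between those two values, yielding $k+1$ distinct values immediately. This intermediate-value trick collapses your $k$ separate monotonicity verifications into a single identity (two, really: one for each parity of $k$).

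Finally, your aside that ``one could even verify the `$\ge k+1$ distinct values' claim for an infinite family by a single automaton-theoretic statement'' is precisely what the paper argues \emph{cannot} be done: the number of distinct values grows like $\Theta(k)$, which rules out a synchronized automaton computing it (this is exactly the obstruction noted just before Theorem~\ref{main}). The paper's workaround is to prove the two difference identities via \emph{linear representations}: it builds rank-$123$, rank-$121$, and rank-$14$ linear representations for the relevant functions in {\tt Walnut}, combines them into a rank-$258$ representation for $3g(t)-3f(t)-h(t)$, and checks in {\tt Maple} that this represents the zero function. If you want a computer-assisted proof, that is the route to take; a single first-order {\tt Walnut} query will not suffice here.
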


\begin{proof}
We claim that
\begin{align}
T (( F_{6n-1}-1)/4, F_{6n}/2, F_{6n}/2)  - T( (F_{6n+2}-1)/4, F_{6n}/2, F_{6n}/2) &= 2n \label{t1} \\
T((F_{6n+5}-1)/4, F_{6n+3}/2, F_{6n+3}/2) - T((F_{6n+2}-1)/4, F_{6n+3}/2, F_{6n+3}/2) &= 2n+1. \label{t2}
\end{align}

To prove these claims we use {\tt Walnut} and its capabilities of producing linear representations.   A linear representation for a function $f(n)$ is a triple $(v,\zeta, w)$ where $v$ is a row vector, $w$ is a column vector, and $\zeta$ is a matrix-valued morphism, such that $f(n) = v \zeta(x) w$ for all representations of $x$ that evaluate to $n$.  In this case, of course, $x$ is a Zeckendorf representation. 

Let us verify Eq.~\eqref{t1}.  We define three functions, as follows:
\begin{itemize}
    \item  $f(F_{6n}) = T( (F_{6n+2}-1)/4, F_{6n}/2, F_{6n}/2)$ for $n \geq 1$,
    and $f(t)= 0$ for all other $t$.

    \item  $g(F_{6n}) = T (( F_{6n-1}-1)/4, F_{6n}/2, F_{6n}/2)$ for $n \geq 1$,
    and $g(t)= 0$ for all other $t$.

    \item $h(F_{6n}) = 6n$
    for $n \geq 1$, and $h(t)= 0$ for all other $t$.
\end{itemize}
Then Eq.~\eqref{t1} is equivalent to the claim that $3f(t)-3g(t)-h(t) = 0$ for all $t$.  Our strategy is to construct linear representations for each of the functions $f(t)$, $g(t)$, and $h(t)$, using {\tt Walnut}, and then combine them to find a linear representation for
$3f(t)-3g(t)-h(t)$.

Consider the following code:
\begin{verbatim}
reg fib6 msd_fib "0*1(000000)*0000":
def cnt6a t "?msd_fib $fib6(t) & Ex,y $adjfib(t,x) & $adjfib(x,y) & 
   F[(y-1)/4+j+k]=@1 & j<t/2 & k<t/2":
# t = F_{6n}, x = F_{6n+1}, y = F_{6n+2}
def cnt6b t "?msd_fib $fib6(t) & Eu $adjfib(u,t) & 
   F[(u-1)/4+j+k]=@1 & j<t/2 & k<t/2":
# t = F_{6n}, u = F_{6n-1}
def numfib "?msd_fib Ey $adjfib(x,y) & i<=y & $isfib(i)":
def numfib6 t "?msd_fib $fib6(t) & $numfib(i,t)":
\end{verbatim}
Here
\begin{itemize}
\item {\tt fib6} accepts the Zeckendorf expansion of $F_{6n}$ for $n \geq 1$.
\item {\tt cnt6a} generates a linear representation of rank $123$ for the function $f(t)$.
\item {\tt cnt6b}
generates a linear representation of rank $121$ for the function $g(t)$.
\item {\tt numfib}$(i,x)$ accepts if and only
if $x = F_n$ for some $n \geq 2$ and $i=n$.
\item {\tt numfib6} creates a linear representation of rank $14$ for the function $h(t)$.
\end{itemize}
In {\tt Walnut} these linear representations are provided as {\tt Maple} files.

From these linear representations we can easily compute, through standard matrix methods, a linear representation of rank $258 = 123 + 121 + 14$ for
$3g(t) - 3f(t) - h(t)$; for the details, see \citet[\S 4.10]{Shallit:2023}. There is an algorithm to decide
if a given linear representation represents the $0$ function \citep{Berstel&Reutenauer:2011}.  We use {\tt Walnut} to create the representations and then a {\tt Maple} program\footnote{Available at \url{https://cs.uwaterloo.ca/~shallit/papers.html}.} to check if $3f(t)-3g(t)-h(t)$ is identically $0$. 
This proves Eq.~\eqref{t1}, and Eq.~\eqref{t2} follows in a similar manner.
\end{proof}

\begin{corollary}
The number of distinct values of $T(i, F_{3n}/2, F_{3n}/2)$ is
$\Theta(n)$.
\end{corollary}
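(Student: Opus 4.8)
The plan is to sandwich the count between $\Omega(n)$ and $O(n)$. The lower bound requires no new work: Theorem~\ref{bnd}, applied with its parameter $k$ taken to be the corollary's $n$, already asserts that $T(i,F_{3n}/2,F_{3n}/2)$ takes at least $n+1$ distinct values as $i$ ranges over all nonnegative integers. (Here one uses, as in the proof of Lemma~\ref{lemma1}, that $\Delta(i,m,n)\in\{-1,0,1\}$, so the set of attained values of $T$ is automatically an interval of integers, and a single large difference already forces many distinct values.)

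For the upper bound I would first put $T$ in a convenient form, using the telescoping identity $\sum_{j\le t\le k}a_t=\lfloor(k+1)\gamma\rfloor-\lfloor j\gamma\rfloor$. Each row of $A(i,m,m)$ is a length-$m$ window $a_u a_{u+1}\cdots a_{u+m-1}$ with $\sum_{\ell=0}^{m-1}a_{u+\ell}=\lfloor(u+m)\gamma\rfloor-\lfloor u\gamma\rfloor$, which equals $\lfloor m\gamma\rfloor$ or $\lfloor m\gamma\rfloor+1$, the larger value occurring precisely when $\{u\gamma\}\ge 1-\{m\gamma\}$. Summing over the $m$ rows $u=i,i+1,\dots,i+m-1$ gives
\[
  T(i,m,m)=m\lfloor m\gamma\rfloor+W_m(i),\qquad
  W_m(i):=\#\bigl\{\,j:\ i\le j<i+m,\ \{j\gamma\}\in[\,1-\{m\gamma\}\,,\,1)\,\bigr\}.
\]
Thus the number of distinct values of $T(i,m,m)$ over all $i$ equals the number of distinct values of $W_m(i)$, which counts how many of the $m$ consecutive golden-rotation iterates $\{i\gamma\},\{(i+1)\gamma\},\dots,\{(i+m-1)\gamma\}$ land in the fixed interval $I_m:=[\,1-\{m\gamma\}\,,\,1)$ of length $\{m\gamma\}$.

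The remaining ingredient is the classical discrepancy bound: since $\gamma=(3-\sqrt5)/2$ is a quadratic irrational, its partial quotients are bounded, so the discrepancy $D_m=\sup_{J}\bigl|\,\#\{0\le j<m:\{j\gamma\}\in J\}-m\,|J|\,\bigr|$ (supremum over subintervals $J\subseteq[0,1)$) satisfies $D_m=O(\log m)$. Because $\{(i+j)\gamma\}=\{\{i\gamma\}+j\gamma\}$, translating $I_m$ by the phase $\{i\gamma\}$ turns it into at most two subintervals of total length $\{m\gamma\}$, so $\bigl|\,W_m(i)-m\{m\gamma\}\,\bigr|\le 2D_m=O(\log m)$ uniformly in $i$ (the factor $2$ absorbing the wrap-around). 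Hence all values of $W_m(i)$, and therefore of $T(i,m,m)$, lie in one interval of length $O(\log m)$, so there are at most $O(\log m)$ of them. For $m=F_{3n}/2$ one has $\log m=\Theta(n)$ — indeed the Zeckendorf expansion of $F_{3n}/2$ is $F_2+F_4+F_7+\cdots+F_{3n-2}$, with $n$ summands — so the count is $O(n)$. Combined with the lower bound, this gives $\Theta(n)$.

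I do not expect a real obstacle here. The only external input is the $O(\log m)$ discrepancy estimate for badly approximable $\gamma$, which is textbook material, and everything else is the elementary rewriting of $T$ above; the only point to get right is that the set of attained values is an interval of integers (so its cardinality is one plus its spread), which is exactly what lets Theorem~\ref{bnd}'s single large difference do the whole job on the lower side while the uniform discrepancy bound does it on the upper side. One could alternatively attempt to certify the upper bound in {\tt Walnut} via linear representations along the family $m=F_{3k}/2$, mirroring the proof of Theorem~\ref{bnd}, but this seems more delicate precisely because the number-of-distinct-values function is provably not automatic, so I would favor the direct estimate above.
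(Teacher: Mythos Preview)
Your argument is correct, and its overall shape matches the paper's: the lower bound comes straight from Theorem~\ref{bnd}, and the upper bound is the estimate that $T(i,x,x)$ takes $O(\log x)$ distinct values, combined with $\log(F_{3n}/2)=\Theta(n)$.

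The one genuine difference is how the $O(\log x)$ upper bound is obtained. The paper simply invokes the result of Berth\'e and Tijdeman~\cite{Berthe}. You instead rederive it from scratch: you rewrite $T(i,m,m)=m\lfloor m\gamma\rfloor+W_m(i)$ with $W_m(i)$ a count of golden-rotation iterates landing in a fixed arc, and then bound the spread of $W_m$ by the discrepancy $D_m=O(\log m)$, which holds because $\gamma$ has bounded partial quotients. This is a legitimate and self-contained alternative; it trades a literature citation for an explicit (and short) equidistribution argument. Your parenthetical remark that the attained values of $T$ form an interval of integers (from $\Delta\in\{-1,0,1\}$) is also correct and clarifies why a single large difference in the proof of Theorem~\ref{bnd} already yields the full count---this step is left implicit in the paper. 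The Zeckendorf-expansion aside about $F_{3n}/2$ is not needed (the growth $F_{3n}\sim\varphi^{3n}/\sqrt{5}$ already gives $\log m=\Theta(n)$), but it does no harm.
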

\begin{proof}
From a result of
\cite{Berthe} we know that the number of distinct values of
$T(i,x,x)$ is $O(\log x)$.  From Theorem~\ref{bnd} we know the number of distinct values of $T(i, F_{3n}/2, F_{3n}/2)$ is $\Omega(n)$.  Combining these gives the result.
\end{proof}

\section{Balanced rectangles more generally}

Let us look at the notion of balanced rectangles somewhat more generally.   Given a word
$w$ and a symbol $c$, we let
$|w|_c$ denote the number of occurrences of $c$ in the word $w$.   We say that an infinite sequence $\bf a$ is $r$-balanced if, for all $c$ and all pairs of equal-length blocks $x$ and $y$ occurring in $\bf a$, we have
$\left| |x|_c - |y|_c \right| \leq r$.

We can generalize this to word rectangles in the obvious way:  if $A(i,m,n)$ is defined as in
Eq.~\eqref{aimn}, then 
$|A(i,m,n)|_c$ is the total number of occurrences of the letter $c$ in the $m \times n$ rectangle $A(i,m,n)$.  Then the natural question is,
for which pairs $(m,n)$ are all the rectangles $A(i,m,n)$ $r$-balanced with respect to each other? That is, for which $m,n$ do we have
$\left| |A(i,m,n)|_c - |A(j,m,n)|_c \right| \leq r$
for all $i, j$ and all $c$?

Consider the infinite Tribonacci word ${\bf TR} = 0102010\cdots = t_0 t_1 t_2 \cdots$, the fixed point of the morphism
$0 \rightarrow 01$, $1 \rightarrow 02$, and $2 \rightarrow 0$; see, for example, \cite{Berstel:2002}.  It seems natural to examine the analogous question of balance for the word rectangles formed from this word.
It is known \cite[see][]{Richomme&Saari&Zamboni:2010} that the Tribonacci word is $2$-balanced; that is, for all $c \in \{0,1,2\}$ and $n \geq 0$, the number of occurrences of the letter $c$ in two words of length $n$ differs by at most $2$.  
Here is the problem we solve in this section:  characterize  $m,n$ such that all the $(m,n)$-rectangles built from the Tribonacci word are $2$-balanced with respect to each other.  By symmetry, it suffices to answer this question for $m \leq n$.

Our proof technique is, once again, based on automata.  Here we need automata that take the Tribonacci representation \citep{Carlitz&Scoville&Hoggatt:1972} of an integer as input.  However, the technique is somewhat different than the one we used for Fibonacci word rectangles.
\begin{theorem}
Assume $m \leq n$. Then
\begin{itemize}
    \item[(a)] If $m = 1$, then all the $m \times n$ rectangles are $2$-balanced;
    \item[(b)] If $m = 2$, there is a $77$-state Tribonacci automaton that accepts exactly those $n$ for which all the $m \times n$ rectangles are $2$-balanced;
    \item[(c)] If $m \geq 3$, then there are no $n$ for which the all the $m \times n$ rectangles are $2$-balanced.
\end{itemize}
\end{theorem}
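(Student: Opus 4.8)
The plan is to handle the three cases separately, in each case reducing to a statement that can be checked with automata reading Tribonacci representations. Case (a) is just a restatement of the $2$-balance of $\bf TR$: when $m=1$, the rectangle $A(i,1,n)$ is the single length-$n$ factor $t_i t_{i+1}\cdots t_{i+n-1}$, so $|A(i,1,n)|_c$ and $|A(j,1,n)|_c$ are the numbers of $c$'s in two equal-length factors of $\bf TR$, and by \cite{Richomme&Saari&Zamboni:2010} these differ by at most $2$. For (b) and (c) I would use a common reduction. Writing $\chi_c(a,n)$ for the number of occurrences of $c$ in $t_a t_{a+1}\cdots t_{a+n-1}$, we have $|A(i,m,n)|_c=\sum_{k=0}^{m-1}\chi_c(i+k,n)$, hence
\[
  \bigl|\, |A(i,m,n)|_c-|A(j,m,n)|_c \,\bigr|
  \;\le\; \sum_{k=0}^{m-1}\bigl|\chi_c(i+k,n)-\chi_c(j+k,n)\bigr|\;\le\;2m ,
\]
each summand being at most $2$ by $2$-balance. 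So for \emph{fixed} $m$ the function $(i,j,n)\mapsto |A(i,m,n)|_c-|A(j,m,n)|_c$ is bounded.

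Next, writing $\chi_c(a,n)=P_c(a+n)-P_c(a)$ with $P_c(b)=|t_0\cdots t_{b-1}|_c$ the prefix-count function — which, being the running sum of the Tribonacci-automatic sequence $b\mapsto[t_b=c]$, has a linear representation in the Tribonacci numeration system — the difference function above is a fixed $\mathbb{Z}$-linear combination of values of $P_c$ at addition-definable arguments, and so itself has a linear representation. A bounded function with a linear representation in an automatic numeration system is automatic, so for each fixed $m$ and each $c$ one gets a Tribonacci automaton recognizing the triples $(i,j,n)$ with $\bigl|\,|A(i,m,n)|_c-|A(j,m,n)|_c\,\bigr|\le 2$. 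Projecting away $i$ and $j$ and intersecting over $c\in\{0,1,2\}$ yields a Tribonacci automaton for the set $B_m$ of all $n$ such that every $m\times n$ rectangle is $2$-balanced; in {\tt Walnut} this is assembled from the linear representations for the finitely many possible difference values.

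Running this construction with $m=2$ gives (b): after minimization {\tt Walnut} returns a $77$-state automaton for $B_2$. For (c) one must show $B_m$ contains no $n\ge m$ whenever $m\ge 3$. I would proceed in two stages: first check with the construction above that $B_m\cap\{n\ge m\}=\varnothing$ for every $m$ up to an explicit bound $m_0$ (finitely many {\tt Walnut} runs), and then dispose of $m>m_0$ directly. For the latter one can use that $\bf TR$ contains no factor $11$ and that its longest factor avoiding the letter $1$ has length $3$, together with the fact that $\max_{|w|=m}|w|_1-\min_{|w|=m}|w|_1=2$ for every $m\ge 3$ (at most $2$ by $2$-balance, and equal to $2$ by a {\tt Walnut} check on $m$): these let one exhibit, for any $n\ge m$, positions $i,i'$ at which the length-$n$ windows making up the two rectangles are respectively ``$1$-poor'' and ``$1$-rich'' enough that $|A(i,m,n)|_1$ and $|A(i',m,n)|_1$ differ by at least $3$. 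A cleaner route, if it succeeds, is to show that non-$2$-balance is always witnessed by a pair $(i,i+w')$ with $w'$ below a single fixed bound $w$; since the relevant increments $\Delta_c$ are each bounded and automatic in $(m,n)$, a bounded sum of them is automatic in $(m,n)$, so one universally quantified {\tt Walnut} statement would settle all $m\ge 3$ at once.

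The main obstacle is precisely this last point. Unlike the Fibonacci case there is \emph{no} automatic description of ``$(m,n)$ is $2$-balanced'' with $m$ and $n$ both free — as shown in Section~\ref{diverse}, the number of distinct values of $T(i,m,n)$ already grows with $n$ for suitable $m$ — so (c) cannot be read off a single automaton over $(m,n)$, and the uniform-in-$m$ non-balance must be obtained either by the case split with an explicit $m_0$ plus a combinatorial argument for $m>m_0$, or by pinning down a bounded-width witness valid for all $m\ge 3$. Establishing one of these is the heart of the proof; everything else reduces to mechanical {\tt Walnut} and linear-algebra computations.
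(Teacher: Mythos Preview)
Parts (a) and (b) are fine; your route for (b) via linear representations and boundedness is more roundabout than necessary (the paper just builds the prefix-count automata for each letter directly from the synchronized position functions, sums two consecutive factor-counts for $m=2$, and universally quantifies), but it would work.

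The genuine gap is in (c). You correctly identify that one cannot simply build an automaton in the two free variables $(m,n)$, but you then leave the actual argument open: neither the ``case split up to $m_0$ plus combinatorics for $m>m_0$'' nor the ``bounded-width witness'' is carried out, and your sketch about $1$-rich versus $1$-poor windows does not obviously produce a difference of at least $3$ in the \emph{rectangle} count (a sum of $m$ factor counts) from a difference of $2$ in individual factor counts. Also, your appeal to Section~\ref{diverse} is misplaced: that section concerns the Fibonacci word, and even there the paper \emph{does} get an $(m,n)$-automaton for balance via the $\Delta$-criterion; what fails is only computing the \emph{number} of distinct $T$-values as a synchronized output.

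The key idea you are missing is this: the multiset of entries of $A(i,m,n)$ is determined by the length-$(m+n-1)$ factor $t_i\cdots t_{i+m+n-2}$, and the five entries $t_{i+m+n-6},\ldots,t_{i+m+n-2}$ are exactly those populating the lower-right $3\times 3$ submatrix (the entry with index sum $s$ occurs on the antidiagonal of that submatrix). Recode $\bf TR$ by $0,1\mapsto 0$, $2\mapsto 2$ to get ${\bf TR}_2$. If one can show that for every $p\ge 0$ there exist $i,j$ with ${\bf TR}_2[i..i+p-1]={\bf TR}_2[j..j+p-1]$ while ${\bf TR}_2[i+p..i+p+4]=00200$ and ${\bf TR}_2[j+p..j+p+4]=00000$, then for any $m,n\ge 3$ one takes $p=m+n-6$: the two $m\times n$ rectangles at $i$ and $j$ agree (after recoding) outside the lower-right $3\times 3$ corner, and in that corner one has three $2$'s and the other has none, so $|A(i,m,n)|_2\ge |A(j,m,n)|_2+3$. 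That ``for every $p$'' statement is a single first-order sentence over the Tribonacci-automatic sequence ${\bf TR}_2$ with one free variable $p$, hence decidable by one {\tt Walnut} run; it returns {\tt TRUE}, and (c) follows for all $m\ge 3$ at once.
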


\begin{proof}
\leavevmode
\begin{itemize}
    \item[(a)] Trivially follows from the $2$-balance of the Tribonacci word.
    \item[(b)] We use {\tt Walnut} to create the automaton.  The code for {\tt triba}, {\tt tribb}, etc.\ is taken from \citet[\S 10.12]{Shallit:2023}.
\begin{verbatim}
reg shift {0,1} {0,1} "([0,0]|[0,1][1,1]*[1,0])*":

def triba "?msd_trib (s=0&n=0) | Ex $shift(n-1,x) & s=x+1":
# position of the n'th 0 in TR, indexing starting with 1
def tribb "?msd_trib (s=0&n=0) | Ex,y $shift(n-1,x) &
   $shift(x,y) & s=y+2":
# position of the n'th 1 in TR, indexing starting with 1
def tribc "?msd_trib (s=0&n=0) | Ex,y,z $shift(n-1,x) &
   $shift(x,y) & $shift(y,z) & s=z+4":
# position of the n'th 2 in TR, indexing starting with 1
def tribd "?msd_trib Et,u 
   $triba(s,t) & $triba(s+1,u) & t<=n & n<u":
# s = number of 0's in TR[0..n-1]
def tribe "?msd_trib Et,u 
   $tribb(s,t) & $tribb(s+1,u) & t<=n & n<u":
# s = number of 1's in TR[0..n-1]
def tribf "?msd_trib Et,u 
   $tribc(s,t) & $tribc(s+1,u) & t<=n & n<u":
# s = number of 2's in TR[0..n-1]

def num0 "?msd_trib Ex,y $tribd(i,x) & $tribd(i+n,y) & z+x=y":
# z is the number of 0's in TR[i..i+n-1]
def num1 "?msd_trib Ex,y $tribe(i,x) & $tribe(i+n,y) & z+x=y":
# z is the number of 1's in TR[i..i+n-1]
def num2 "?msd_trib Ex,y $tribf(i,x) & $tribf(i+n,y) & z+x=y":
# z is the number of 2's in TR[i..i+n-1]

# what follows is the code that generates the automaton 
# accepting the n such that the 2xn rectangles are 2-balanced

def row2_0 "?msd_trib Ex,y 
   $num0(i,n,x) & $num0(i+1,n,y) & z=x+y":
# z is number of 0's in 2xn rectangle cornered at i
def row2_1 "?msd_trib Ex,y 
   $num1(i,n,x) & $num1(i+1,n,y) & z=x+y":
# z is number of 1's in 2xn rectangle cornered at i
def row2_2 "?msd_trib Ex,y 
   $num2(i,n,x) & $num2(i+1,n,y) & z=x+y":
# z is number of 2's in 2xn rectangle cornered at i

def chk2_0 "?msd_trib Ai,j,y,z 
   ($row2_0(i,n,y) & $row2_0(j,n,z) & y<=z) => (z<=y+2)":
# check 2-balance of 0's in 2xn matrices
def chk2_1 "?msd_trib Ai,j,y,z 
   ($row2_1(i,n,y) & $row2_1(j,n,z) & y<=z) => (z<=y+2)":
# check 2-balance of 1's in 2xn matrices
def chk2_2 "?msd_trib Ai,j,y,z 
   ($row2_2(i,n,y) & $row2_2(j,n,z) & y<=z) => (z<=y+2)":
# check 2-balance of 2's in 2xn matrices

def bal2 "?msd_trib $chk2_0(n) & $chk2_1(n) & $chk2_2(n)":
# 77 states    
\end{verbatim}
The first few such $n$ are
$$ 1, 2, 3, 4, 7, 8, 9, 10, 11, 14, 15, 22, 23, 24, 27, 28, 33, 34, 35, 46, 47, 48, \ldots . $$
The fact that we need 77 states suggests there is unlikely to be a simple English description of these $n$.

\item[(c)]  Once again we use {\tt Walnut}, but with a different idea.  Define
the matrix $B(i,m,n)$ to be $A(i,m,n)$ with the letters
recoded as follows:  $0, 1 \rightarrow 0$ and $2 \rightarrow 2$.
The idea is to show that provided $m, n \geq 3$ there exist $i,j$
such that $B(i,m,n)$ and $B(j,m,n)$ agree, except on the $3 \times 3$ submatrix lying in the lower right corner:  $B(i,m,n)$ has
$$ \left[ \begin{matrix}
0 & 0 & 2 \\
0 & 2 & 0 \\
2 & 0 & 0
\end{matrix} \right]$$
in the lower right corner, while
$B(j,m,n)$ has
$$ \left[ \begin{matrix}
0 & 0 & 0 \\
0 & 0 & 0 \\
0 & 0 & 0
\end{matrix}\right] .$$
Thus $|A(i,m,n)|_2 \geq |A(j,m,n)|_2 + 3$, showing that
the $m \times n$ rectangles cannot be $2$-balanced with respect to each other.

We use the following {\tt Walnut} code:
\begin{verbatim}
morphism map2 "0->0 1->0 2->2":
image TR2 map2 TR:
def trib2eqfac "?msd_trib Au,v (u>=i & u<i+n & u+j=v+i) =>
   TR2[u]=TR2[v]":
eval cb2 "?msd_trib Ap Ei,j $trib2eqfac(i,j,p) & TR2[i+p]=@0 & 
   TR2[i+p+1]=@0 & TR2[i+p+2]=@0 & TR2[i+p+3]=@0 & 
   TR2[i+p+4]=@0 & TR2[j+p]=@0 & TR2[j+p+1]=@0 & 
   TR2[j+p+2]=@2 & TR2[j+p+3]=@0 & TR2[j+p+4]=@0":
\end{verbatim}
And {\tt Walnut} returns {\tt TRUE}.

Here {\tt TR2} is an automaton for the Tribonacci word ${\bf TR}_2$ that is 0 at positions where the Tribonacci word has values 0 or 1, and 2 at positions with value 2.
The automaton {\tt trib2eqfac} accepts $(i,j,n)$ if and only if ${\bf TR}_2[i..i+n-1]={\bf TR}_2[j..j+n-1]$.

Finally, {\tt cb2} says that for all $p$ there exist $i, j$
such that the block of length $p+5$ starting at $i$ looks like $w00000$, while
the block of length $p+5$ starting at $j$ looks like $w00200$.
Thus, if $m, n \geq 3$, we can set $p = m+n-6$ to find two $m \times n$ rectangles that are unbalanced with respect to each other for the symbol $2$. \vspace{-1.7\baselineskip}
\end{itemize}
\end{proof}

\section{The Thue--Morse word}

We can also form rectangles out of the
Thue--Morse word
${\bf t} = 01101001\cdots$, fixed point of the morphism $0 \rightarrow 01$, $1 \rightarrow 10$.
Here we can get a finite automaton for a complete description of the balance.  It is easier in this case because $t_{2n+1} = 1-t_{2n}$, so one can ``pair up'' entries except at the edges of the rectangle.

\begin{theorem}
\leavevmode
\begin{itemize}
\item[(a)] Every $m \times n$ word rectangle $A(i,m,n)$  built from $\bf t$ satisfies $|2 \cdot |A(i,m,n)|_1 -mn| \leq 4$.
\item[(b)] If there is an $m \times n$ word rectangle with
$2|A(i,m,n)|_1 - mn = c$, then there is one with
$2|A(j,m,n)|_1 - mn = -c$.
\item[(c)] The $m \times n$ word rectangles, for $m, n\geq 1$ have balance
either $1,2,3$ or $4$.
\end{itemize}
\end{theorem}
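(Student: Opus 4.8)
The plan is to prove part (a) by a direct ``pairing'' argument and to derive parts (b) and (c) as easy consequences. Throughout, I recode the bits to $\pm1$: set $s_v := 2t_v-1\in\{-1,+1\}$, so that $s$ is the $\pm1$ Thue--Morse sequence and satisfies $s_{2j}=s_j$ and $s_{2j+1}=-s_j$; in particular $s_{2j}+s_{2j+1}=0$. Writing $S(p,q):=\sum_{v=p}^{q}s_v$, a one-line case analysis on the parities of $p$ and $q$ (strip at most one term off each end so that what remains is a union of pairs $\{2j,2j+1\}$, each summing to $0$) gives the basic estimate
\[
S(p,q)\in\{-2,-1,0,1,2\}\qquad\text{for all }p\le q.
\]
Two further observations: $s$ restricted to a contiguous block of even indices is a shift of $s$ itself (since $s_{2j}=s_j$), and on a contiguous block of odd indices it is $-1$ times a shift of $s$ (since $s_{2j+1}=-s_j$); hence any sum $\sum_{r=0}^{t-1}s_{a+2r}$ equals $\pm S(\cdot,\cdot)$ and so lies in $\{-2,\ldots,2\}$.

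For (a), since $|A(i,m,n)|_1=|A(i,n,m)|_1$ we may assume $m\le n$. Summing row by row,
\[
2|A(i,m,n)|_1-mn \;=\; \sum_{k=0}^{m-1}\Bigl(2\,|t_{i+k}t_{i+k+1}\cdots t_{i+k+n-1}|_1-n\Bigr) \;=\; \sum_{k=0}^{m-1}S(i+k,\,i+k+n-1),
\]
so it suffices to bound $\sum_{k}r_k$ with $r_k:=S(i+k,i+k+n-1)$. Using $s_{2j}+s_{2j+1}=0$, each $r_k$ collapses to very little. If $n$ is odd, one gets $r_k=s_{i+k+n-1}$ when $i+k$ is even and $r_k=s_{i+k}$ when $i+k$ is odd, so $\sum_k r_k$ splits as $\sum_{i+k\text{ even}}s_{i+k+n-1}+\sum_{i+k\text{ odd}}s_{i+k}$; the first runs over a contiguous block of even indices and the second over a contiguous block of odd indices, so each is $\pm$ a partial sum of $s$, hence in $\{-2,\ldots,2\}$. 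If $n$ is even, $r_k=0$ when $i+k$ is even and $r_k=s_{i+k}+s_{i+k+n-1}$ when $i+k$ is odd, and again the two resulting sums run over a contiguous block of odd indices and a contiguous block of even indices. Either way $2|A(i,m,n)|_1-mn$ regroups into (at most) two partial sums of the $\pm1$ Thue--Morse sequence, up to sign, and therefore $\bigl|\,2|A(i,m,n)|_1-mn\,\bigr|\le 2+2=4$. (Equivalently, one may first re-sum $2|A(i,m,n)|_1-mn=\sum_v c_v\,s_{i+v}$ over the antidiagonal multiplicities $c_v=\min(v+1,m,m+n-1-v)$ and then pair $\{2j,2j+1\}$: the weights $c_v-c_{v+1}$ form a block of $-1$'s, then $0$'s, then $+1$'s, and the at most two boundary leftovers merge into the adjacent blocks. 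The estimate can also be checked in {\tt Walnut} once the bounded function $2|A(i,m,n)|_1-mn$ is expressed, but the pairing argument is cleaner.)

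For (b) I use the standard fact that $\mathrm{Fac}(\mathbf{t})$ is closed under the bitwise complement $0\leftrightarrow1$: if $\mu$ denotes the Thue--Morse morphism then $\mu(\overline{x})=\overline{\mu(x)}$, hence $\mu^k(1)=\overline{\mu^k(0)}$, and since $\mu^k(0)\mu^k(1)$ is a prefix of $\mathbf{t}$, the complement of every factor of $\mathbf{t}$ is again a factor of $\mathbf{t}$. Now every entry of $A(i,m,n)$ is determined by the single factor $w:=t_i t_{i+1}\cdots t_{i+m+n-2}$, via $A(i,m,n)_{k,\ell}=w_{k+\ell}$. Choose $j$ with $\overline{w}=t_j t_{j+1}\cdots t_{j+m+n-2}$; then $A(j,m,n)_{k,\ell}=1-A(i,m,n)_{k,\ell}$, so $|A(j,m,n)|_1=mn-|A(i,m,n)|_1$, and therefore $2|A(j,m,n)|_1-mn=-\bigl(2|A(i,m,n)|_1-mn\bigr)$, which is exactly (b).

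Finally, (c) combines the two parts. The balance of the $m\times n$ rectangles is $\max_{i,j}|T(i,m,n)-T(j,m,n)|$; by (a) every $2T(i,m,n)$ lies in $[mn-4,mn+4]$, so any two values of $T$ differ by at most $4$, giving balance $\le 4$. For the lower bound one checks that $T(\cdot,m,n)$ is never constant when $m,n\ge1$ — equivalently $\Delta(i,m,n)\not\equiv0$ — which is a routine verification (for instance in {\tt Walnut}, since $\Delta$ is automatic, or directly from the non-eventual-periodicity of $\mathbf{t}$), so balance $\ge1$. I expect part (a), and specifically finding the regrouping that keeps the number of surviving partial sums bounded (the naive row-by-row estimate only yields $2m$), to be the main obstacle; (b) and (c) are then short.
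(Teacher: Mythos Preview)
Your argument is correct and follows essentially the same route as the paper: for (a) you pair entries within each row using $s_{2j}+s_{2j+1}=0$ and then handle the leftover column contributions via the next level of Thue--Morse self-similarity (you phrase this as $s_{2j}=s_j$, $s_{2j+1}=-s_j$ reducing to a bounded partial sum; the paper phrases it as the mod-$4$ identities $t_{4k}+t_{4k+2}=1$ and $t_{4k+1}+t_{4k+3}=1$, which is the same thing), and (b) and (c) are handled identically, via closure of the factor set under bitwise complement and non-eventual-periodicity of~$\mathbf t$. The only cosmetic difference is your use of the $\pm1$ recoding, which makes the bookkeeping a little cleaner.
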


\begin{proof}
Let us write ${\bf t} = 01101001\cdots = t_0t_1t_2\cdots$.  (This is the same notation we used for Tribonacci but hopefully the reader will not be confused.) It follows from an easy induction that $t_{2k} + t_{2k+1} = 1$ for all $k\geq 0$. Similarly, $t_{4k} + t_{4k+2} = 1$ and $t_{4k+1} + t_{4k+3}=1$ for all $k\geq 0$. 
Recall the definition~\eqref{aimn} of $A(i,m,n)$. Since each row contains a block of ${\bf t}$, we can use the fact $t_{2k} + t_{2k+1} = 1$ to pair up adjacent bits with sum $1$ in each row of $A(i,m,n)$. Then exactly the following bits will be left unmatched: all entries in the leftmost column with odd indices and all entries in the rightmost column with even indices.
In the second step we can use $t_{4k+1} + t_{4k+3}=1$ to pair up bits in the leftmost column, and use $t_{4k} + t_{4k+2} = 1$ to pair up bits in the rightmost column.
At the end, at most four bits might be left unmatched, namely
\begin{itemize}
\item either $a_i$ or $a_{i+1}$, in case one of them is congruent to $3$ modulo $4$;
\item either $a_{i+m-2}$ or $a_{i+m-1}$, in case one of them is congruent to $1$ modulo $4$;
\item either $a_{i+n-1}$ or $a_{i+n}$, in case one of them is congruent to $2$ modulo $4$;
\item either $a_{i+n+m-3}$ or $a_{i+n+m-2}$, in case one of them is congruent to $0$ modulo $4$.
\end{itemize}
Thus we have matched $mn-\ell$ bits in total, with sum $(mn - \ell)/2$, and $\ell$ additional unmatched bits.
Since $0 \leq \ell \leq 4$,
this immediately implies $mn/2 - 2 \leq T(i,m,n) \leq mn/2 - 2 + 4 = mn/2 + 2$.
This gives the bound in (a).

(b) follows because if there is a word rectangle with $2|A(i,m,n)|_1 - mn = c$, then there is a word rectangle with every 0 flipped to 1 and every 1 flipped to 0,
giving a difference of $-c$.

(c) follows from (a) and the fact that the Thue--Morse sequence is not eventually periodic, so we cannot have balance 0.
\end{proof}

Now let us create a {\tt Walnut} automaton to compute the balance.  This introduces some complications.  Possibly  the conceptually simplest way involves computation with negative numbers, and the simplest way to do that in {\tt Walnut} is to represent integers in base-$(-2)$.  Then, at the end, when getting the automaton for balance, we
can convert it to a base-$2$
automaton using the techniques of \cite{Shallit&Shan&Yang:2023}.

\begin{theorem}
    There is an automaton of $92$ states that on input $m$ and $n$ in base $2$ determines the balance of the $m \times n$ rectangles
    built from $\bf t$.
\end{theorem}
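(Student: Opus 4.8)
The plan is to express the balance of the $m\times n$ rectangles as a function of $m$ and $n$ that is definable in first-order logic and takes only finitely many values, and then let \texttt{Walnut} build the automaton computing it. First I would reduce to computing a single extremal quantity. By the previous theorem, for every $m,n\ge 1$ the balance equals $\max_i |A(i,m,n)|_1-\min_i |A(i,m,n)|_1$; writing $D(i,m,n):=2|A(i,m,n)|_1-mn$, part~(a) gives $D(i,m,n)\in\{-4,\dots,4\}$, so the balance is $\tfrac12\bigl(\max_i D(i,m,n)-\min_i D(i,m,n)\bigr)$. Part~(b) says the value set $\{D(i,m,n):i\ge 0\}$ is symmetric about $0$, hence $\min_i D(i,m,n)=-\max_i D(i,m,n)$ and the balance is simply $\max_i D(i,m,n)$, which by~(c) lies in $\{1,2,3,4\}$. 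So it suffices to build a deterministic finite automaton with output (DFAO) that on input $(m,n)$ outputs $\max_i D(i,m,n)$.

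Next I would make $D$ explicit and bounded using the matching argument already used in the proof of the previous theorem. There, $D(i,m,n)$ is the signed count of the at most four bits of $\mathbf t$ left unmatched, each surviving bit at position $p$ contributing $2t_p-1\in\{-1,1\}$. Which of the four ``corner'' bits survives, and at which position, is determined entirely by the residues of $i$, $i+m$, $i+n$, and $i+m+n$ modulo $4$: for example the top-of-left-column bit survives precisely when $i\equiv 3\pmod 4$ (it is then $t_i$) or $i\equiv 2\pmod 4$ (it is then $t_{i+1}$), and the remaining three corners are treated the same way. Consequently, for each fixed $d\in\{-4,\dots,4\}$ the relation ``$D(i,m,n)=d$'' is a first-order formula over $(\mathbb{N},+,V_2,\mathbf t)$ --- a finite disjunction over residue patterns and over the decompositions of $d$ into four summands from $\{-1,0,1\}$, each summand pinned to a single bit $t_p$. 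The small cases $m\le 2$ (and by symmetry $n\le 2$), where the left and right columns interact and the four corners are not independent, must be handled by hand; $m=1$ is already part~(a) of the previous theorem, and $m=2$ is a short finite check.

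With ``$D(i,m,n)=d$'' in hand for $d=-4,\dots,4$, I would define, in \texttt{Walnut}, the predicate $\mathrm{bal}(m,n,b):=\bigl(\exists i\ D(i,m,n)=b\bigr)\wedge\bigl(\forall i\ \bigwedge_{d>b}\neg(D(i,m,n)=d)\bigr)$ and extract the DFAO sending $(m,n)$ to the unique $b\in\{1,2,3,4\}$ with $\mathrm{bal}(m,n,b)$ true. Since the nine relations $D(i,m,n)=d$ involve only residue and bit tests, this can be carried out directly in base $2$; alternatively, following the remark preceding the theorem, one can do the arithmetic in base $-2$ (allowing the intermediate quantities to be negative) and convert the final automaton to base $2$ by the method of \cite{Shallit&Shan&Yang:2023}. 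Either way, the ``$92$ states'' is just what \texttt{Walnut} reports after minimization.

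The main obstacle is the modulo-$4$ bookkeeping in the second step: pinning down, for each of the four corners, exactly when a bit survives and whether it is $a_p$ or $a_{p+1}$, and correctly isolating the small-parameter exceptions where the ``four independent corners'' picture breaks down. Once that case analysis is right, everything else is mechanical, and the state count is a machine-verified artifact rather than something to be computed by hand.
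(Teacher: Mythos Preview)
Your plan is sound and would produce an automaton; the route, however, differs from the paper's.  The paper only carries out the \emph{first} level of pairing (adjacent bits in each row), which leaves two column sums of Thue--Morse bits at halved indices; it then computes these column sums exactly via a synchronized ``factor-sum'' automaton, splits only on the \emph{parity} of $m$ and $n$ (reducing odd cases to even by peeling off a row or column), does all the arithmetic on the possibly negative quantity $2T(i,m,n)-mn$ in base $-2$, and converts the final balance automaton to base $2$ at the end.  By contrast you push the matching argument to its conclusion, down to at most four surviving Thue--Morse bits determined by residues of $i,i+m,i+n,i+m+n$ modulo $4$, and encode $D(i,m,n)=d$ directly as a first-order formula over residue tests and single bits $t_p$.

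What each approach buys: the paper's version is uniform in $m,n$ (no small-case patches) and offloads the arithmetic to \texttt{Walnut}'s prefix-sum machinery, at the cost of working in base $-2$ and then converting.  Your version is conceptually lighter---no prefix sums, only individual bits and congruences, and it can be carried out natively in base $2$---but the price is a larger hand case analysis (sixteen residue patterns times the sign decompositions of each $d$) and the need to treat $m\le 2$ and $n\le 2$ separately, where the four ``corner'' bits are not independent.  Either route, once coded, yields a DFAO whose minimized state count \texttt{Walnut} reports as $92$; neither derives that number by hand.
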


\begin{proof}
We use the following {\tt Walnut} code.
The idea is illustrated for the case $i,m,n$ all even.  We consider pairing all the entries of the matrix with an even index with the odd index that follows it.
This leaves unpaired entries in the leftmost column and the
rightmost column.

In the leftmost column these entries are
$t_{i+1} + t_{i+3} + \cdots + t_{i+m-1}$.
Since all the indices are odd,
this is the same as
$(1-t_{i/2}) +  (1-t_{(i+2)/2}) + \cdots + (1-t_{(i+m-2)/2})
= m/2 - (t_{i/2} + t_{i/2+1} + \cdots + t_{(i+m)/2-1})$.
In the rightmost column these entries are
$t_{i+n} + t_{i+n+2} + \cdots + t_{i+m+n-2} =
t_{(i+n)/2} + t_{(i+n)/2 + 1} + \cdots +
t_{(i+m+n)/2 -1}$.
The case of $i$ odd is similar, and we reduce the other cases to this one by removing a row or column, or both.

Here the automaton {\tt TM2} computes the Thue-Morse sequence, but with input given in base-$(-2)$.  The automaton
{\tt twonegtwo} takes two inputs,
$w$ and $x$ in parallel, and accepts if $w$ considered as a base-$(-2)$ expansion equals $x$ considered as a base-$2$ expansion.  Both can be downloaded from the website of the first author.
\begin{verbatim}
def n2even "?msd_neg_2 Ek n=2*k":
def n2odd "?msd_neg_2 Ek n=2*k+1":
def prefsum "?msd_neg_2 (s=n/2 & $n2even(n)) | 
   (s=n/2+TM2[n-1] & $n2odd(n))":
# sum of t[0..n-1]
# 17 states
def factorsum "?msd_neg_2 Eu,v $prefsum(i,u) & $prefsum(i+n,v) & 
   s+u=v":
# sum of t[i..i+n-1]
# 317 states
def m_even_n_even "?msd_neg_2 Ea,b $factorsum(i/2,m/2,a) & 
   $factorsum((i+n)/2,m/2,b) & s=2*(b-a)":
# s = 2T(i,m,n) - mn  for m even, n even

def m_even_n_odd "?msd_neg_2 Ea,b $m_even_n_even(i,m,n-1,a) & 
   $factorsum(i+n-1,m,b) & s=a+2*b-m":
def m_odd_n_even "?msd_neg_2 Ea,b $m_even_n_even(i,m-1,n,a) & 
   $factorsum(i+m-1,n,b) & s=a+2*b-n":
def m_odd_n_odd "?msd_neg_2 Ea,b $m_odd_n_even(i,m,n-1,a) & 
   $factorsum(i+n-1,m,b) & s=a+2*b-m":

def tmt "?msd_neg_2  i>=0 & m>=0 & n>=0 & 
(($n2even(m) & $n2even(n) & $m_even_n_even(i,m,n,s)) |
($n2even(m) & $n2odd(n) & $m_even_n_odd(i,m,n,s)) |
($n2odd(m) & $n2even(n) & $m_odd_n_even(i,m,n,s)) |
($n2odd(m) & $n2odd(n) & $m_odd_n_odd(i,m,n,s)))":
# computes 2T(i,m,n) - mn for all i, m, n >= 0
# 3678 states

def maxval "?msd_neg_2 (Ei $tmt(i,m,n,x)) & 
   Ai,s $tmt(i,m,n,s) => s<=x":
# 81 states

def tmbal "Ex,y,b $twonegtwo((?msd_neg_2 x),m) & 
   $twonegtwo((?msd_neg_2 y),n) & $twonegtwo((?msd_neg_2 b),z) &
   $maxval(?msd_neg_2 x,y,b)":
# 94 states

def aut0 "$tmbal(m,n,0)":
def aut1 "$tmbal(m,n,1)":
def aut2 "$tmbal(m,n,2)": 
def aut3 "$tmbal(m,n,3)":
def aut4 "$tmbal(m,n,4)":

combine TMB aut0=0 aut1=1 aut2=2 aut3=3 aut4=4:
# result has 92 states 
\end{verbatim}\vspace{-\baselineskip}
\end{proof}

\begin{remark}
This was a fairly large computation in {\tt Walnut}.  The largest intermediate automaton in the computation of {\tt maxval} had 32,465,028 states.  The computation
took 12834 seconds of CPU
time and required 100 gigabytes of RAM.
\end{remark}

\begin{theorem}
Suppose $m, n\geq 3$. The set of $m \times n$ word rectangles for $\bf t$ has balance exactly $3$ if and only if $m$ and $n$ are both odd.  Here by ``balance exactly $3$'' we mean the balance is bounded above by $3$, and there is also a pair of rectangles that achieves this bound.
\end{theorem}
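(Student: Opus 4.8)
The plan is to track the signed quantity $d(i) := 2\,|A(i,m,n)|_1 - mn$ and to lean on the matched-bits argument used above to prove that every $m\times n$ rectangle satisfies $|d(i)|\le 4$. Recall that after the two pairing steps the only possibly unmatched cells come one from each of the four ``corner pairs'' $\{a_i,a_{i+1}\}$, $\{a_{i+m-2},a_{i+m-1}\}$, $\{a_{i+n-1},a_{i+n}\}$, $\{a_{i+m+n-3},a_{i+m+n-2}\}$, and a given corner actually contributes a leftover cell exactly when a congruence modulo $4$ holds; writing $b(x)=1$ if $x\equiv 2$ or $3\pmod 4$ and $b(x)=0$ otherwise, one reads off from those congruences that the number $u(i)$ of leftover cells is $u(i)=b(i)+b(i+m)+b(i+n)+b(i+m+n)$. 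Since the matched cells pair off with sum $1$, $d(i)$ is a sum of $u(i)$ terms each equal to $\pm 1$; in particular $|d(i)|\le u(i)$. I would also record the parity fact that, since $mn = 2\cdot(\#\text{pairs}) + u(i)$, one has $u(i)\equiv mn\pmod 2$, hence $d(i)\equiv mn\pmod 2$ for every $i$. Finally, flipping every $0$ to $1$ and vice versa sends $d(i)$ to $-d(i)$, so the set $\{d(i):i\ge 0\}$ is symmetric about $0$ and the balance of the $m\times n$ rectangles equals $\max_i |d(i)|$, a number with the same parity as $mn$.

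One direction is then immediate: if $m$ or $n$ is even, $mn$ is even, so the balance is even and in particular not $3$; this gives ``balance exactly $3$ $\Rightarrow$ $m,n$ both odd''. For the converse, suppose $m,n\ge 3$ are both odd. Then $mn$ is odd, so every $d(i)$ is odd, and with $|d(i)|\le 4$ this forces $d(i)\in\{-3,-1,1,3\}$; hence the balance lies in $\{1,3\}$ and in particular is $\le 3$. What remains is to exhibit, for each such pair, an $i$ with $|d(i)|=3$, equivalently an $i$ for which exactly three of the four corners contribute and the three leftover cells carry the same bit. A first easy step: summing $u(i)$ over $i\in\{0,1,2,3\}$ and using that $b$ has period $4$ with $b(0)+b(1)+b(2)+b(3)=2$ gives $\sum_{i=0}^{3}u(i)=8$, and since each $u(i)$ is odd and at most $4$, exactly two residue classes $i\bmod 4$ satisfy $u(i)=3$. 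Fixing such a class, the three leftover cells sit at positions $i+e_1,i+e_2,i+e_3$ for offsets $e_1<e_2<e_3$ that are constant on that class (they depend only on $m$ and $n$), and it suffices to find $i$ in the class with $t_{i+e_1}=t_{i+e_2}=t_{i+e_3}$.

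The main obstacle is exactly this last existence claim. Because $e_2$ and $e_3$ can be of order $m+n$, there is no fixed finite window to inspect, so a bare-hands proof would seem to require an induction exploiting the self-similar identity $t_{4k+r}=t_k\oplus s(r)$ (where $s(0)=s(3)=0$ and $s(1)=s(2)=1$) to reduce the gaps $e_j-e_1$ step by step. The route I would actually take is to invoke the $92$-state automaton {\tt TMB} constructed above, which on input $(m,n)$ in base $2$ outputs the balance of the $m\times n$ rectangles, and to verify by a single {\tt Walnut} query that {\tt TMB} returns $3$ precisely when $m\ge 3$, $n\ge 3$, and both $m$ and $n$ are odd — the size and parity conditions being recognizable by a base-$2$ automaton. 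This reduces the theorem to a finite, mechanically checkable statement; the conceptual content that survives is the parity observation, which explains the ``both odd'' shape of the answer, together with the reduction above.
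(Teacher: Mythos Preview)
Your proposal is correct and, at its core, uses exactly the paper's approach: the paper's entire proof is the single {\tt Walnut} query
\begin{verbatim}
eval bal3 "Am,n (m>=3 & n>=3) => (($odd(m) & $odd(n)) <=> TMB[m][n]=@3)":
\end{verbatim}
against the $92$-state automaton {\tt TMB}, which is precisely what you end up invoking.

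What you add on top of this is genuinely useful, though not a different route. Your parity observation --- that $d(i)=2|A(i,m,n)|_1-mn$ is a sum of $u(i)$ terms in $\{-1,1\}$ with $u(i)\equiv mn\pmod 2$, hence the balance $\max_i|d(i)|$ has the parity of $mn$ --- gives a clean, computation-free proof of the direction ``balance $=3$ $\Rightarrow$ $m,n$ both odd'', and also explains \emph{why} the answer has the shape it does. The paper does not separate the directions and simply lets {\tt Walnut} certify the full biconditional. For the converse you correctly identify the real obstacle (forcing three leftover bits to agree when the offsets $e_j$ grow with $m+n$) and honestly fall back on {\tt TMB}; that is the same reliance the paper has. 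So your write-up is the paper's proof plus a parity heuristic that makes one implication elementary.
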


\begin{proof}
We use the following {\tt Walnut} code:
\begin{verbatim}
def even "Ek n=2*k":
def odd "Ek n=2*k+1":
eval bal3 "Am,n (m>=3 & n>=3) => 
   (($odd(m) & $odd(n)) <=> TMB[m][n]=@3)":
\end{verbatim}
And {\tt Walnut} returns {\tt TRUE}.
\end{proof}

The $m \times n$ rectangles having balance exactly $2$ or $4$ are more
difficult to understand.

\section{Final remarks}

Recently the second author has given criteria, in terms of the Ostrowski representation of any real number $\alpha$, for deciding whether $(m,n)$ is balanced for the matrix ${\cal A}_{\alpha}$; see \cite{Vukusic:2026}.

We acknowledge with thanks discussions with Pierre Popoli and Narad Rampersad.  We also thank the referees for their helpful remarks.

All the {\tt Walnut} code and {\tt Maple} files used can be downloaded from \\
\centerline{\url{https://cs.uwaterloo.ca/~shallit/papers.html} . } 


\nocite{*}
\bibliographystyle{abbrvnat}
\bibliography{abbrevs,refs}

@string{LNICS = "Lecture Notes in Computer Science"}

@string{CUP = "Cambridge University Press"}

@string{SV = "Springer-Verlag"}

@string{AAM = "Adv. in Appl. Math."}

@string{BBMS = "Bull. Belgian Math. Soc."}

@string{BSRL = "Bull. Soc. Roy. {Li\`ege}"}

@string{CMB = "Canad. Math. Bull."}

@string{ELECJC = "Electronic J. Combinatorics"}

@string{FQ = "Fibonacci Quart."}

@string{IJAC = "Internat. J. Algebra Comput."}

@string{RAIRO = "RAIRO Inform. Th\'eor. App."}

@string{SS = "Simon Stevin"}

@string{TCS = "Theoret. Comput. Sci."}

@preamble{ "\newcommand{\noopsort}[1]{} "
	# "\newcommand{\singleletter}[1]{#1} " }

@incollection{Anselmo,
    author = "Marcella Anselmo and Dora Giammarresi and Maria Madonia and Carla Selmi",
    title = "Fibonacci Pictures on a Binary Alphabet",
    booktitle = "DCFS 2025",
    editor = "A. Malcher and L. Prigioniero",
    series = LNICS,
    volume = 15759,
    publisher = SV,
    year = 2025,
    pages = "1-16"}

@misc{Mousavi:2016,
  author = {H. Mousavi},
  title = "Automatic theorem proving in \texttt{{{Walnut}}}",
  year = {2016},
  howpublished = {ArXiv preprint arXiv:1603.06017 [cs.FL]},
  url = {http://arxiv.org/abs/1603.06017}
}

@book{Shallit:2023,
  author = {J. Shallit},
  title = {The Logical Approach To Automatic Sequences: Exploring Combinatorics on Words with {\tt Walnut}},
  series = {London Mathematical Society Lecture Note Series},
  volume = {482},
  publisher = {Cambridge University Press},
  year = {2023}
}

@article{Puzynina:2019,
    author = "S. Puzynina",
    title = "Aperiodic two-dimensional words of small abelian complexity",
    journal = ELECJC,
    volume = 26,
    number = 4,
    year = 2019,
    pages = "\#P4.15"}

@unpublished{Vukusic:2026,
    author = "I. Vukusic",
    title = "Balanced rectangles over {Sturmian} words and minimal discrepancy intervals",
    year = 2026,
    note = "Arxiv preprint 	arXiv:2602.12801 [math.NT], available at \url{https://arxiv.org/abs/2602.12801}"}

@article{Shallit&Shan&Yang:2023,
author = "J. Shallit and S. L. Shan and K. H. Yang",
title = "Automatic sequences in negative bases and proofs of some conjectures of {Shevelev}",
journal = RAIRO,
volume = 57,
year = 2023,
pages = "\#4"}

@article{Carlitz&Scoville&Hoggatt:1972,
	author = "L. Carlitz and R. Scoville and Hoggatt, Jr., V. E.",
	title = "Fibonacci representations of higher order",
	journal = FQ,
	volume = 10,
	year = 1972,
	pages = "43-69, 94"}

@article{Berstel:2002,
    author = "J. Berstel",
    title = "Recent results on
extensions of {Sturmian} words",
    journal = IJAC,
    volume = 12,
    year = 2002,
    pages = "371-385"}

@article{Richomme&Saari&Zamboni:2010,
author = "G. Richomme and K. Saari and L. Q. Zamboni",
title = "Balance and abelian complexity of the {Tribonacci} word",
journal = AAM,
volume = 45,
year = 2010,
pages = "212-231"}

@book{Berstel&Reutenauer:2011,
	author = "Jean Berstel and Christophe Reutenauer",
	title = "Noncommutative Rational Series With Applications",
	publisher = "Cambridge Univ. Press",
	series = "Encyclopedia of Mathematics and Its Applications",
	volume = 137,
	year = 2011}

@article{Berthe,
    author = "Berth{\'e}, V. and Tijdeman, R.",
    title = "Balance properties of multi-dimensional words",
    journal = TCS,
    volume = 273,
    year = 2002,
    pages = "197-224"}

@unpublished{Schaeffer&Shallit&Zorcic:2024,
	author = "L. Schaeffer and J. Shallit and S. Zorcic",
	title = "Beatty sequences for a quadratic irrational: decidability and applications",
	note = "Arxiv preprint arXiv:2402.08331 [math.NT].  Available at \url{https://arxiv.org/abs/2402.08331}",
	year = 2024}

@article{Zeckendorf:1972,
	key = "Zeckendorf 1972",
	author = "E. Zeckendorf",
	title = "{Repr\'esentation} des nombres naturels par une somme de 
nombres de {Fibonacci} ou de nombres de {Lucas}",
	journal = BSRL,
	volume = 41,
	year = 1972,
	pages = "179-182"}

@article{Lekkerkerker:1952,
	key = "Lekkerkerker 1952",
	author = "C. G. Lekkerkerker",
	title = "Voorstelling van natuurlijke getallen door een som van
getallen van {Fibonacci}",
	journal = SS,
	volume = 29,
	year = 1952,
	pages = "190-195"}

@inproceedings{Shallit:2021,
  author = {J. Shallit},
  title = {Synchronized sequences},
  booktitle = {WORDS 2021},
  editor = {T. Lecroq and S. Puzynina},
  series = {Lecture Notes in Computer Science},
  volume = {12847},
  pages = {1--19},
  publisher = {Springer-Verlag},
  year = {2021},
doi       = {10.1007/978-3-030-85088-3_1}
}

@article{Bruyere&Hansel&Michaux&Villemaire:1994,
	key = "{Bruy\`ere}, Hansel, Michaux, and Villemaire 1994",
	author = "{Bruy\`ere}, V. and Hansel, G. and Michaux, C. and Villemaire, R.",
	title = "Logic and $p$-recognizable sets of integers",
	journal = BBMS,
	volume = 1,
	year = 1994,
	pages = "191-238",
	note = "Corrigendum, {\it Bull.\ Belgian\ Math.\ Soc.} 1:577, 1994"}

@book{Allouche&Shallit:2003,
  author = {J.-P. Allouche and J. Shallit},
  title = {Automatic Sequences: Theory, Applications, Generalizations},
  publisher = {Cambridge University Press},
  year = {2003}
}

@incollection{Berstel&Seebold:2002,
    author = "J. Berstel and
P. {S\'e\'ebold}",
    title = "Sturmian words",
    booktitle = "Algebraic Combinatorics on Words",
    publisher = CUP,
    year = 2002,
    editor = "M. Lothaire",
    pages = "45-110"}

@article{BrownT:1993,
        key = "T. Brown 1993",
        author = "T. C. Brown",
        title = "Descriptions of the characteristic sequence of an
irrational",
        journal = CMB,
        volume = 36,
        year = 1993,
        pages = "15-21"}

@incollection{Berstel:1986b,
	key = "Berstel 1986b",
        author = "Berstel, J.",
        title = "Fibonacci words---a survey",
        booktitle = "The Book of L",
        editor = "G. Rozenberg and A. Salomaa",
        publisher = SV,
        year = "1986",
        pages = "13-27"}

@article{Vuillon:2003,
    author = "Vuillon, L.",
    doi = "10.36045/bbms/1074791332",
    journal = BBMS,
    number = "5",
    pages = "787--805",
    title = "Balanced words",
    volume = "10",
    year = "2003"}

@book{Markovsky:2019,
    author = "I. Markovsky",
title = "Low-Rank Approximation:  Algorithms, Implementation, Applications",
    edition = "2nd",
publisher = SV,
    year = 2019}
\label{sec:biblio}

\appendix
\section{{\tt Walnut} commands}

The three principal {\tt Walnut} commands used in this paper are
\begin{itemize}
    \item {\tt reg}:  defines a regular expression;
    \item {\tt def}: defines an automaton for later use;
    \item {\tt eval}: evaluates a first-order logical statement with no free variables as {\tt TRUE} or {\tt FALSE}.
\end{itemize}

The jargon {\tt ?msd\_fib} at the beginning of a command tells {\tt Walnut} that numbers are represented in the Zeckendorf numeration system.   Similarly, {\tt ?msd\_trib} at the beginning of a command tells {\tt Walnut} that numbers are represented in the Tribonacci numeration system, and
{\tt ?msd\_neg\_2} tells {\tt Walnut} to use base-$(-2)$.

Logical AND $(\wedge)$ is represented by {\tt \&}; logical OR $(\vee)$ is represented by {\tt |}; logical NOT $(\neg)$ is represented by
{\tt \char'176}; implication is represented by {\tt =>}; and logical
IFF is represented by {\tt <=>}.

The capital letter {\tt A} is the universal quantifier $\forall$; the capital letter {\tt E} is the existential quantifier $\exists$.

Comments are denoted with the symbol {\tt \#}.

When an automaton is defined as a function of its unbound variables, it may be reused and the order of the arguments is the alphabetical order of the names of the free variables that were used when it was defined.

The capital letter {\tt F} refers to the Fibonacci word ${\bf f} = 01001\cdots$, the capital letters {\tt TR} 
refer to the Tribonacci word
${\bf TR} = 0102010\cdots$,
and {\tt T} refers to the Thue--Morse word ${\bf t} = 01101001\cdots$.
In each the word is indexed starting at index $0$.

The command {\tt morphism} constructs an automaton from a morphism, and {\tt image} applies a coding to the transitions.

\end{document}